\newtheorem{theorem}{Theorem}[section]
\newtheorem{corollary}[theorem]{Corollary}
\newtheorem{lemma}[theorem]{Lemma}
\theoremstyle{definition}
\newtheorem{remark}[theorem]{Remark}
\numberwithin{equation}{section}
\newcommand{\N}{\mathbb{N}}
\newcommand{\Z}{\mathbb{Z}}
\newcommand{\R}{\mathbb{R}}
\newcommand{\C}{\mathbb{C}}
\renewcommand{\Re}{\operatorname{Re}}
\renewcommand{\Im}{\operatorname{Im}}
\newcommand{\I}{\mathrm{i}}
\newcommand{\e}{\mathrm{e}}
\newcommand{\eps}{\varepsilon}
\newcommand{\MP}{\mathcal{P}}
\newcommand{\MN}{\mathcal{N}}
\newcommand{\SSS}{\mathcal{S}}
\newcommand{\RR}{\mathcal{R}}
\newcommand{\NP}{\mathcal{N}_{\mathcal P}}
\newcommand{\MM}{\mathcal{M}}
\newcommand{\psip}{\psi_{\mathcal{P}}}
\newcommand{\Pip}{\Pi_{\mathcal{P}}}
\newcommand{\pip}{\pi_{\mathcal{P}}}
\newcommand{\zs}{\zeta_{\mathcal{P}}(s)}
\newcommand{\de}{\delta}
\newcommand{\si}{\sigma}
\newcommand{\om}{\omega}
\definecolor{green}{rgb}{0,0.8,0}
\definecolor{darkgreen}{rgb}{0,0.6,0.2}
\DeclareMathOperator{\Li}{Li}
\DeclareMathOperator{\li}{li}
\DeclareMathOperator*{\res}{Res}
\begin{document}

\title{Some examples of well-behaved Beurling number systems}

\author[F. Broucke]{Frederik Broucke}
\thanks{F. Broucke was supported by a postdoctoral fellowship (grant number 12ZZH23N) of the Research Foundation -- Flanders}

\author[G.~Debruyne]{Gregory Debruyne}
\thanks{G. Debruyne was supported by a postdoctoral fellowship from Ghent University (grant number 01P13022)}

\address{Department of Mathematics: Analysis, Logic and Discrete Mathematics\\ Ghent University\\ Krijgslaan 281\\ 9000 Gent\\ Belgium}

\email{fabrouck.broucke@ugent.be}
\email{gregory.debruyne@ugent.be}

\author[Sz. Gy. R\'ev\'esz]{Szil\'ard Gy.\ R\'ev\'esz}
\address{HUN-REN Alfréd Rényi Institute of Mathematics, Reáltanoda utca 13-15 Budapest, 1053 Hungary} \thanks{The work of Sz. Gy. R\'ev\'esz was supported in part by Hungarian National Research, Development and Innovation Fund projects \# K-119528, \# K-146387 and \# K-147153}

\email{revesz.szilard@renyi.hu}

\subjclass[2020]{Primary 11N80, Secondary 11M26, 11M41}

\keywords{Beurling generalized prime number systems, well-behaved Beurling number systems, $[\alpha,\beta]$-systems, Beurling zeta function, random prime selection algorithm}

\begin{abstract}
We investigate the existence of well-behaved Beurling number systems, which are systems of Beurling generalized primes and integers which admit a power saving in the error term of both their prime and integer-counting function. Concretely, we search for so-called $[\alpha,\beta]$-systems, where $\alpha$ and $\beta$ are connected to the optimal power saving in the prime and integer-counting functions. It is known that every $[\alpha,\beta]$-system satisfies $\max\{\alpha,\beta\}\ge1/2$. In this paper we show there are $[\alpha,\beta]$-systems for each $\alpha \in [0,1)$ and $\beta \in [1/2, 1)$. Assuming the Riemann hypothesis, we also construct certain families of $[\alpha,\beta]$-systems with $\beta<1/2$.
\end{abstract}
\maketitle
\section{Introduction}
The \emph{Riemann hypothesis} (RH) is one of the central open problems in number theory today, stating that every non-trivial zero of $\zeta(s)$ must lie on the critical line $\Re s = 1/2$. It is intimately connected to the distribution of prime numbers since an equivalent formulation of RH is that the prime counting function satisfies $\pi(x) = \Li(x) + O_{\eps}(x^{1/2 + \eps})$, for each $\eps > 0$, where $\Li$ stands for the logarithmic integral, defined here as $\Li(x)\coloneqq \int_{1}^{x}\frac{1-u^{-1}}{\log u}\dif u$. In this work we investigate whether it is possible in general multiplicative structures to reconcile \emph{quasi-RH}-type estimates on the primes with strong estimates on the integers. We will carry out this study in the context of Beurling generalized prime numbers.

A \emph{Beurling generalized number system} $(\mathcal{P}, \mathcal{N})$ consists of a sequence of \emph{generalized primes} $\mathcal{P} = (p_{j})_{j}$ with $1 < p_{1} \leq  p_{2} \leq \dots $ and $p_{j}\to \infty$, and the \emph{generalized integers} $\mathcal{N}$ which are formed by taking the multiplicative semigroup generated by the generalized primes and $1$. Arranging the generalized integers in non-decreasing order\footnote{We consider generalized integers to be different if their generalized prime factorization is different, even if they share the same numerical value.}, one obtains the sequence $1 = n_{0} < n_{1}= p_{1} \leq n_{2} \leq \dots$.  One may associate to this system many familiar number-theoretic functions, such as
\[
	\pi_{\mathcal{P}}(x) = \sum_{{p_j} \leq x} 1, \quad N_{\mathcal{P}}(x) = \sum_{n_j \leq x} 1, \quad \zeta_{\mathcal{P}}(s) = \sum_{n_j} \frac{1}{{n}^{s}_j}.
\]
The relationship between these three functions has been the subject of extensive research over the last century. We refer to the monograph \cite{DiamondZhangbook} for a detailed account on the theory of Beurling systems.

This paper considers Beurling number systems $(\MP,\MN)$ whose integers have a density and whose primes satisfy the Prime Number Theorem. These are systems $(\MP, \MN)$ with $N_{\MP}(x) \sim ax$ for some $a>0$, and $\psi_{\MP}(x)\sim x$. Here $\psi_{\MP}(x) \coloneqq \sum_{p_{j}^{k}\le x}\log p_{j}$ denotes the Chebyshev function of the Beurling system. More specifically we investigate the existence of systems which admit a power saving in the error terms for both these asymptotic relations. Let $\alpha,\beta\in [0,1)$. An \emph{$[\alpha,\beta]$-system}, a notion introduced by Hilberdink \cite{Hilberdink2005}, is a Beurling generalized number system $(\MP, \MN)$ for which there exists $a > 0$ such that both
\begin{align}
\label{eq: well-behaved primes}
	\psi_{\MP}(x) 	&= x + O_{\eps}(x^{\alpha+\eps}), \\
\label{eq: well-behaved integers}
	N_{\MP}(x)	&= a x + O_{\eps}(x^{\beta+\eps})
\end{align}
hold\footnote{In other words,
\[
\alpha=\limsup_{x\to \infty} \frac{\log|\psi_{\MP}(x)-x|}{\log x}\qquad \textrm{and}  \qquad \beta=\limsup_{x\to \infty} \frac{\log|N_{\MP}(x)-ax|}{\log x}.
\]
} for every $\eps>0$, but for no $\eps<0$.

The formula \eqref{eq: well-behaved primes} being valid for each $\eps > 0$ is equivalent to $\Pi_{\MP}(x) = \Li(x) + O_\eps(x^{\alpha + \eps})$ and $\pi_{\MP}(x) = \li(x) + O_\eps(x^{\alpha + \eps})$. Here $\Pi_{\MP}(x)$ stands for the Riemann prime counting function $\Pi_{\MP}(x) \coloneqq \sum_{k = 1}^{\infty}\frac{1}{k} \pi_{\MP}(x^{1/k})$ and $\li$, which is a modification of the logarithmic integral, is defined as $\li(x) \coloneqq \sum_{k=1}^{\infty}\frac{\mu(k)}{k}\Li(x^{1/k})$.
In \cite{Hilberdink2005}, Hilberdink showed that for any $[\alpha, \beta]$-system, we have $\max\{\alpha, \beta\} \ge 1/2$. This result can be interpreted as a type of \emph{uncertainty principle}. If one wishes strong control on the primes then one must relinquish the control of the integers and vice-versa. One may conjecture that for all $\alpha,\beta$ with $\max\{\alpha, \beta\}\ge1/2$, there must exist a corresponding $[\alpha,\beta]$-system. The main goal of this paper is to establish this conjecture in certain ranges for $\alpha$ and $\beta$.

If \eqref{eq: well-behaved primes} holds for every $\eps>0$, we will say that \emph{the primes (of the system $\mathcal{P}$) are $\alpha$-well-behaved} and omit $\alpha$ if it is clear from the context. Similarly the expression \eqref{eq: well-behaved integers} for every $\eps>0$ shall be referred to as \emph{the integers are $\beta$-well-behaved}. Regarding the relationship between both counting functions, it is known that having $\beta$-well-behaved integers implies that $\psi_{\MP}(x) = x + O\bigl((x\exp(-c\sqrt{\log x})\bigr)$, and having $\alpha$-well-behaved primes implies that $N(x) = ax + O\bigl(x\exp(-c'\sqrt{\log x\log\log x})\bigr)$, for some $a>0$ and $c, c'>0$ depending on $\beta$ and $\alpha$ respectively. In general, these exponential error terms are optimal, as was shown in \cite{DiamondMontgomeryVorhauer} for the first implication and in \cite{BDV2020} for the second one (see also \cite{BDV2022}).

If only \eqref{eq: well-behaved primes} holds for all $\eps>0$ and no $\eps<0$, but the integers are not $\beta$-well-behaved for any $\beta <1$, the system is called an $[\alpha,1]$-system. Analogously a system\footnote{We refrain here from defining $[1,1]$-systems as there are multiple possibilities for its definition. One may for instance say that the primes are $1$-well-behaved if the PNT holds, Chebyshev estimates hold or even only $\psi(x) \ll_\eps x^{1+\eps}$. As mentioned before, if $\min\{\alpha,\beta\} < 1$, then the Beurling system must admit both the PNT and density, so that the primes, resp.\ integers are then always $1$-well-behaved, whatever \emph{sensible} definition is chosen.} is called $[1,\beta]$ if \eqref{eq: well-behaved integers} holds for every $\eps>0$ and no $\eps<0$, but the primes are not $\alpha$-well-behaved for any $\alpha < 1$. We say that a Beurling system is well-behaved if both $\alpha$ and $\beta$ are less than $1$.

\medskip

When Hilberdink \cite{Hilberdink2005} introduced the notion of $[\alpha,\beta]$-systems, the only known examples were conjectural, namely the rational primes and integers $(\mathbb{P},\N)$, which form a $[1/2,0]$-system under the Riemann hypothesis, and more generally the systems associated to the prime and integral ideals of algebraic number fields, which form $[1/2, \beta]$-systems ($\beta<1$ depending on the number field) under the extended Riemann hypothesis. In \cite{Hilberdink2005}, Hilberdink mentions that Montgomery told him he might be able to construct certain $[\alpha,\beta]$-systems using a probabilistic method from \cite{DiamondMontgomeryVorhauer}. The first unconditional example of a well-behaved system was established by Zhang \cite{Zhang2007}, who used this probabilistic method to show the existence of a system satisfying $\psi_{\MP}(x) = x + O_{\eps}(x^{1/2+\eps})$ and $N_{\MP}(x) = ax + O_{\eps}(x^{1/2+\eps})$ for all $\eps>0$. Due to the probabilistic nature of the method, no precise value of $\alpha$ and $\beta$ could be determined.

Recently, Vindas and the first author \cite{BrouckeVindas} found a refinement of the probabilistic method which allows a much better control on the randomly constructed primes. In that paper, the existence of $[0,1/2]$-systems was shown.

Up to now, no general classes of Beurling number systems were constructed to demonstrate that for general pairs of parameters such systems may exist. We address this issue in the present paper. The first main theorem of this work is

\begin{theorem}
\label{th: case I and II}
For any $\alpha \in [0,1)$ and $\beta \in [1/2, 1)$ there exists an $[\alpha, \beta]$-system.
\end{theorem}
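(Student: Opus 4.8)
The plan is to build the system explicitly and deterministically rather than probabilistically, so that the parameters $\alpha$ and $\beta$ can be read off directly. The guiding principle is the correspondence, mediated by $\log\zp$, between power savings in $\psip$ and analytic continuation of $\zp$: having $\alpha$-well-behaved primes is (essentially) equivalent to $\zp(s)$ continuing zero-free and of finite order to $\Re s > \alpha$, while $\beta$-well-behaved integers correspond to $\zp(s) - a/(s-1)$ continuing to $\Re s > \beta$. So I would first fix a target analytic object: take a baseline system whose zeta function is well understood (for instance a continuous analogue, or a known $[1/2,\beta]$-type building block), and then perturb it by adjoining or deleting a carefully chosen set of extra ``primes'' whose Mellin transform introduces a controlled singularity on the line $\Re s = \alpha$ (to force the prime error term to be exactly $x^{\alpha}$ in size, not smaller) while contributing only an $O(x^{\beta+\eps})$ perturbation to $N_{\MP}$.

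Concretely, the key steps in order: (1) Start from a model density $dN_0(x)$ on $[1,\infty)$ with $N_0(x) = ax + O(x^{\beta+\eps})$ and with $\zeta_0(s) = \int_{1}^{\infty} x^{-s}\,dN_0(x)$ meromorphic and nice in $\Re s > \beta$; the Diamond--Montgomery--Vorhauer / Zhang-style continuous constructions, or a Dirichlet-series construction with a prescribed pole at $s=1$ and prescribed behaviour on $\Re s = \alpha$, furnish this. (2) Define the primes through $\log\zeta_0(s)$, i.e.\ choose $\Pi_{\MP}$ so that $\Pi_{\MP}(x) = \li(x) + (\text{oscillating term of exact order } x^{\alpha})$, for instance a term behaving like $x^{\alpha}\cos(t_0\log x)/(\log x)$ coming from a conjugate pair of logarithmic branch points of $\zeta_0$ at $\alpha \pm i t_0$; arrange the construction so that no better saving than $\alpha$ holds and no better saving than $\beta$ holds for the integers. (3) Discretize: replace the continuous density by an actual sequence of Beurling primes using the standard rounding/approximation lemmas (as in Diamond--Zhang), checking that discretization perturbs $\psip$ and $N_{\MP}$ by at most $O(x^{1/2})$, hence does not spoil the error exponents since $\beta \ge 1/2$ — this is exactly why the hypothesis $\beta \ge 1/2$ appears and makes the theorem unconditional in this range. (4) Verify the two asymptotic formulas with the claimed exponents by a contour-shift / Perron argument on $\zp(s)$ and on $\log\zp(s)$, and verify sharpness (the ``no $\eps<0$'' clause) from the genuine singularities placed on the lines $\Re s = \alpha$ and $\Re s = \beta$.

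I expect the main obstacle to be step (3) combined with the sharpness half of step (4): one must design the construction so that \emph{both} prescribed exponents survive discretization \emph{and} are provably optimal. Making $\psip(x) - x$ genuinely as large as $x^{\alpha}$ infinitely often — rather than merely $O(x^{\alpha+\eps})$ — requires the singularity on $\Re s = \alpha$ to be robust under the $O(x^{1/2})$ discretization error, which forces $\alpha \ge 1/2$ to be handled differently from $\alpha < 1/2$; for $\alpha < 1/2$ the continuous prime density must itself be engineered (via a Fourier/Mellin construction with slowly spaced oscillation frequencies) so that the $x^{\alpha}$-oscillation is not drowned out, and one must then show the discretization can be done \emph{without} introducing a spurious $x^{1/2}$ error in $\psip$, e.g.\ by discretizing the integers first and deriving the primes, or by a more delicate rounding that keeps the prime error at $O(x^{\alpha+\eps})$. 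Balancing these competing requirements — large-but-controlled oscillation on one line, small perturbation on another, and exact discretization bookkeeping — is the technical heart of the argument.
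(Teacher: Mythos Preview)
Your broad shape---build a continuous template with prescribed analytic behaviour of $\zeta$, then discretize---matches the paper. But two key points are misidentified, and together they constitute a real gap.

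First, you have the discretization error backwards. The random-approximation theorem (Theorem \ref{th: discretization}) gives $|\pi_{\MP}(x)-F(x)|\le 2$, so $\psi_{\MP}$ is perturbed only by $O(\log x)$; the $\sqrt{x}$ in \eqref{eq: bound exp sums} enters the analysis of $N_{\MP}$, not of $\psi_{\MP}$. Hence there is no difficulty whatsoever with $\alpha<1/2$: one simply places a zero of the template zeta at $s=\alpha$ (or omits it for $\alpha=0$), discretizes, and the prime oscillation of size $x^{\alpha}$ survives exactly. Your proposed workaround for $\alpha<1/2$ (``discretize the integers first'', ``more delicate rounding'') is solving a nonexistent problem.

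Second, and more seriously, the genuine case distinction is not $\alpha\gtrless 1/2$ but $\alpha\gtrless\beta$. If $\beta\le\alpha$ one can place a \emph{pole} of $\zeta$ at $s=\beta$ (producing the $x^{\beta}$ term in $N_{\MP}$) together with a zero at $s=\alpha$; this is Section \ref{sec:betalessalpha}. But if $\alpha<\beta$, placing a pole of $\zeta$ on $\Re s=\beta$ is forbidden: a pole of $\zeta$ is a singularity of $\log\zeta$, hence forces an $x^{\beta}$-oscillation in $\psi_{\MP}$ and destroys $\alpha$-well-behavedness of the primes. Your step (2) (``singularities placed on the lines $\Re s=\alpha$ and $\Re s=\beta$'') does not address this obstruction. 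The paper's solution (Section \ref{sec:alphalessbeta}) is to engineer $\zeta$ with no pole in $\Re s>\alpha$ but with \emph{super-polynomial growth} along a sequence $\sigma+\I\tau_{l}$ just to the left of $\Re s=\beta$, via bounded oscillatory pieces $R_{l}$ added to $\Pi$; this growth prevents $N_{\MP}(x)-ax\ll x^{\beta-\eps}$ while leaving $\psi_{\MP}$ untouched at order $x^{\alpha}$. That mechanism is the technical heart of the case $\alpha<\beta$, and it is absent from your plan.
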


It is also possible to construct $[\alpha, 1]$-systems for any $0\leq \alpha < 1$ and $[1,\beta]$-systems for any $1/2 \leq  \beta < 1$, but the examples that we shall present here do not fully go through in these cases. An example of a $[0,1]$-system can be found in \cite{BDV2022} and that of a $[1,\beta_0]$-system\footnote{Most likely the value of $\beta_0$ equals $1/2$, but in principle it is still possible that $\beta_0$ could be smaller.} is in \cite[Ch.\ 17]{DiamondZhangbook} (which is based on the papers \cite{DiamondMontgomeryVorhauer} and \cite{Zhang2007}) for some $\beta_0 \leq 1/2$. The construction for arbitrary $[\alpha,1]$- and $[1,\beta]$-systems with $\beta\ge1/2$ only requires some minor modifications to these examples which we shall briefly discuss. It should be noted that in both these works, much more precise estimates on $\psi_{\MP}(x)$ and $N_{\MP}(x)$ than what is needed to be an $[\alpha,1]$- or $[1,\beta]$-system were obtained.

The construction of $[\alpha,\beta]$-systems will be done in two stages. First, a generalized number system in the \emph{extended sense} will be constructed which satisfies the requirements. We define a number system in the extended sense as a pair of non-decreasing right-continuous functions $(\Pi(x), N(x))$, supported on $[1,\infty)$, which satisfy $\Pi(1)=0$, $N(1)=1$, and
\[
	\zeta(s) \coloneqq \int_{1^{-}}^{\infty}x^{-s}\dif N(x) = \exp \int_{1}^{\infty}x^{-s}\dif \Pi(x)
\]
(or equivalently, $\dif N(x) = \exp^{\ast}(\dif \Pi(x))$, where the exponential is taken with respect to the multiplicative convolution of measures \cite[Ch.\ 2--3]{DiamondZhangbook}).

Next, we show that this number system in the extended sense can be suitably approximated by an actual \emph{discrete} system $(\MP, \MN)$. The key technical tool for this is the following theorem (\cite[Th.\ 1.2]{BrouckeVindas}).
\begin{theorem}
\label{th: discretization}
Let $F$ be a non-decreasing right-continuous function tending to $\infty$, with $F(1) = 0$ and satisfying the Chebyshev upper bound $F(x)\ll x/\log x$. Then there exists a sequence of generalized primes $\MP=(p_{j})_{j=1}^{\infty}$ such that $\abs{\pi_{\MP}(x)-F(x)}\le 2$ and such that for any real $t$ and any $x\ge1$
\begin{equation}
\label{eq: bound exp sums}
	\abs[3]{\sum_{p_{j}\le x}p_{j}^{-\I t} - \int_{1}^{x}u^{-\I t}\dif F(u)} \ll \sqrt{x} + \sqrt{\frac{x\log(\,\abs{t}+1)}{\log(x+1)}}.
\end{equation}
\end{theorem}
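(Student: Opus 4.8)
The plan is a randomized inversion of $F$ followed by a concentration argument that is uniform in $x$ and $t$ at once. Let $\phi(y)\coloneqq\inf\{u\ge1:F(u)\ge y\}$ for $y>0$ be the generalized inverse of $F$. Since $F$ is non-decreasing, right-continuous, tends to $\infty$, and has $F(1)=0$, the map $\phi$ is non-decreasing, sends $(0,\infty)$ into $(1,\infty)$, tends to $\infty$, and pushes Lebesgue measure on $(0,\infty)$ forward to $\dif F$; in particular $\phi(u)\le x\iff u\le F(x)$ and $\int_{1}^{x}u^{-\I t}\dif F(u)=\int_{0}^{F(x)}\phi(y)^{-\I t}\dif y$. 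Draw independent random variables $U_{k}$ ($k\ge1$) with $U_{k}$ uniform on $(k-1,k]$, and set $p_{k}\coloneqq\phi(U_{k})$. As $U_{1}<U_{2}<\cdots$, the sequence $1<p_{1}\le p_{2}\le\cdots\to\infty$ is a valid Beurling prime sequence, and $\pi_{\MP}(x)=\#\{k:U_{k}\le F(x)\}$ equals $\lfloor F(x)\rfloor$ or $\lfloor F(x)\rfloor+1$, so $\abs{\pi_{\MP}(x)-F(x)}<1$ deterministically; this settles the first assertion.

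For \eqref{eq: bound exp sums}, the same substitution gives, with $n\coloneqq\lfloor F(x)\rfloor$,
\[
\sum_{p_{j}\le x}p_{j}^{-\I t}-\int_{1}^{x}u^{-\I t}\dif F(u)=\sum_{k=1}^{n}Z_{k}(t)+O(1),\qquad Z_{k}(t)\coloneqq\phi(U_{k})^{-\I t}-\int_{k-1}^{k}\phi(y)^{-\I t}\dif y.
\]
The $Z_{k}(t)$ are independent, centred, bounded by $2$, and, writing $L_{k}$ for the oscillation of $\log\phi$ on $(k-1,k]$, satisfy $\mathbb{E}\abs{Z_{k}(t)}^{2}\ll\min\{1,t^{2}L_{k}^{2}\}$ (replacing $\phi^{-\I t}$ on the interval by a unimodular constant costs $\le\min\{2,\abs{t}L_{k}\}$). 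Because $\log\phi$ is non-decreasing, $\sum_{k\le n}L_{k}\le\log\phi(n)\le\log x$ (using $\phi(n)\le\phi(F(x))\le x$), and because $n\le F(x)\ll x/\log x$ by the Chebyshev hypothesis, the predictable quadratic variation satisfies $\sigma_{x}^{2}(t)\coloneqq\sum_{k\le n}\mathbb{E}\abs{Z_{k}(t)}^{2}\ll\min\{x/\log x,\abs{t}\log x\}$. Moreover $\sum_{k\le n}Z_{k}(t)$ is a martingale in $n$ with increments bounded by $2$, and $\partial_{t}$ of the displayed difference is, in modulus, $\ll\sum_{p_{j}\le x}\log p_{j}+\int_{1}^{x}\log u\,\dif F(u)\ll x$ by partial summation.

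With these ingredients I would deduce \eqref{eq: bound exp sums} from Freedman's inequality together with a union bound over a net in $t$ and Borel--Cantelli. Fix a dyadic range $x\in[2^{m},2^{m+1})$, so $n\le N_{m}\ll2^{m}/m$; for $\abs{t}\ge\e^{2^{m}/m}$ the trivial bound $\abs{\sum_{k\le n}Z_{k}(t)}\le2n\ll x/\log x$ already dominates the target, so one may restrict to $\abs{t}\le\e^{2^{m}/m}$. Partition this $t$-range into sub-blocks $\abs{t}\in[\e^{j},\e^{j+1}]$ with $0\le j\lesssim2^{m}/m$; by the Lipschitz bound an $O(1)$-net on such a sub-block has $\ll2^{m}\e^{j}$ points. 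Freedman's inequality in maximal form (which simultaneously captures the sup over $n$, hence over $x$ in the block) gives, for each net point,
\[
\mathbb{P}\Bigl(\sup_{n\le N_{m}}\Bigl|\textstyle\sum_{k\le n}Z_{k}(t)\Bigr|\ge\lambda\Bigr)\ll\exp\Bigl(-\frac{c\lambda^{2}}{\sigma_{x}^{2}(t)+\lambda}\Bigr),
\]
and the choice $\lambda\asymp\sqrt{\sigma_{x}^{2}(t)(m+j)}+(m+j)\ll\sqrt{x}+\sqrt{x\log(\abs{t}+1)/\log x}$ makes this probability, times the net size $\ll2^{m}\e^{j}$, summable over all $m\ge1$ and $0\le j\lesssim2^{m}/m$. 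Borel--Cantelli then produces one realization $(p_{j})_{j}$ for which \eqref{eq: bound exp sums} holds for all $x\ge1$ and all real $t$, the finitely many exceptional $(m,j)$ and the bounded $t$-range being absorbed into the implied constant.

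The step I expect to be the genuine obstacle is the uniformity in $t$: the honest variance is $\sigma_{x}^{2}(t)\asymp x/\log x$ for large $\abs{t}$, while the $t$-range to be controlled is essentially $[-\e^{x/\log x},\e^{x/\log x}]$, on which a $1/x$-spaced net has entropy $\asymp x/\log x$ — comparable to the variance, which a crude union bound would turn into the trivial $x/\log x$. The remedy is to organize the union bound scale-by-scale in $\log\abs{t}$ and to let the strength of the tail estimate (equivalently, the moment order) grow with the scale, so that the $\asymp m+j$ of entropy at scale $j$ costs a factor $\sqrt{m+j}$ rather than $\e^{m+j}$; this is exactly what produces the $\sqrt{x\log(\abs{t}+1)/\log x}$ shape, with the $\sqrt{x}$ coming from the small-$t$ regime and from the supremum over $x$ within a block. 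Everything else — the construction, the counting estimate, the variance and Lipschitz bounds — is routine once this picture is set up.
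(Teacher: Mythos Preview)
The paper does not itself prove Theorem~\ref{th: discretization}; it is quoted from \cite{BrouckeVindas} and only the construction of the random primes is recalled in Section~\ref{sec:betalesshalf}. Your construction via the generalized inverse $\phi$ and i.i.d.\ uniforms $U_{k}$ on $(k-1,k]$ is exactly that construction in different notation (indeed, the paper notes there that ``the counting function of the sequence $p_{j}$ is at most $1$ apart from $F$'', which is your deterministic bound $|\pi_{\MP}-F|<1$).

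Your concentration scheme is also the one used in \cite{BrouckeVindas}: one bounds the variance of each centred term by $\min\{O(1),\,t^{2}L_{k}^{2}\}$, uses $\sum_{k\le n}L_{k}\le\log x$ to get $\sigma_{x}^{2}(t)\ll\min\{x/\log x,\,|t|\log x\}$, applies a Bernstein/Freedman-type tail bound, and then runs a union bound over a net in $t$ organised logarithmically, combined with a dyadic decomposition in $x$ and Borel--Cantelli. Your identification of the crux---that a single global net in $t$ of entropy $\asymp x/\log x$ would only recover the trivial bound, whereas localising to $|t|\asymp \e^{j}$ gives entropy $\asymp m+j$ and yields precisely the shape $\sqrt{x\log(|t|+1)/\log x}$---is the point of the argument, and your back-of-the-envelope verification that $\lambda\asymp\sqrt{\sigma^{2}(m+j)}+(m+j)$ stays below the target for all $j\lesssim 2^{m}/m$ is correct. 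A minor remark: since the $Z_{k}(t)$ are independent with deterministic variances, Freedman here is just Bernstein's inequality in maximal form; and since the $Z_{k}$ are complex, one applies it to real and imaginary parts separately. With those caveats the outline is sound and matches the cited proof.
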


\medskip

We cannot apply the above method to construct $[\alpha,\beta]$-systems with $\beta<1/2$, since Theorem \ref{th: discretization} can only control the distribution of the integers up to an error of size $O(x^{1/2+\eps})$, as we shall later see (this is basically due to the presence of the factor $\sqrt{x}$ in the right hand side of \eqref{eq: bound exp sums}). In fact, any method for approximating systems in the extended sense by discrete systems $(\MP, \MN)$ which yields $O(x^{\theta})$ control on either $\Pi_{\MP}(x)$ or $N_{\MP}(x)$, where $\theta<1/2$, should have an uncertainty of size at least $x^{1/2-\eps}$ for every $\eps>0$ on the other counting function. If not, then approximating the extended system $(\Pi(x),N(x)) = (\Li(x), x)$, for which $\zeta(s) = s/(s-1)$, would yield an $[\alpha,\beta]$-system with $\max\{\alpha,\beta\} < 1/2$, contradicting Hilberdink's result.

Instead of creating new systems from scratch, we will modify the classical system of rational primes and integers $(\mathbb{P}, \N)$. Assuming the Riemann hypothesis, this is a $[1/2, 0]$-system. We will introduce certain perturbations of $(\mathbb{P}, \N)$ giving rise to new families of $[\alpha,\beta]$-systems with $\beta<1/2$.
\begin{theorem} \label{wbs: thrh}
Assume RH. Then there exists a $[1/2, \beta]$-system for each $0 \leq \beta < 1/2$ and an $[\alpha,\beta]$-system for $1/2 < \alpha < 2/3$ and $2\alpha/(\alpha + 2) \leq \beta < 1/2$.
\end{theorem}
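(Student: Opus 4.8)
The plan is to perturb the classical system $(\mathbb{P},\N)$ multiplicatively at the level of zeta functions. Write $\zeta(s)$ for the Riemann zeta function and look for a new extended system with zeta function $\zeta(s)G(s)$, where $G$ is a suitably chosen Dirichlet-series-like factor analytic and zero-free in a half-plane $\Re s > 1/2$ (so that RH is preserved for the new system) but which introduces a controlled oscillation. Concretely, I would take $\log G(s) = \int_1^\infty x^{-s}\dif R(x)$ where $\dif R$ is a signed measure designed so that the new prime-counting function is $\Pi(x) = \Li(x) + R(x)$ with $R(x)$ of exact order $x^{\alpha}$ (an honest oscillation, e.g.\ built from $x^{\alpha}$ times a slowly varying cosine, or a sum of point masses arranged to produce such an oscillation), while the corresponding correction to $N(x)$ stays of order $x^{\beta}$. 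The arithmetic relation $\dif N = \exp^*(\dif\Pi) = \dif N_{\N} * \exp^*(\dif R)$ means the integer correction is, to first order, $\int_1^x N_{\N}(x/u)\,\dif R(u) \approx c\int_1^x (x/u)\,\dif R(u)$, i.e.\ essentially the Mellin/Stieltjes transform of $\dif R$ against the density; a careful choice of the frequency of oscillation in $R$ (so that there is cancellation in this convolution) is what forces $\beta$ down to $2\alpha/(\alpha+2)$ rather than $\beta = \alpha$. This trade-off exponent is the arithmetic heart of the construction and the first place I would do the exact computation.

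Once the extended system $(\Pi,N)$ is in hand with $\Pi(x) = \Li(x) + \Theta(x^\alpha)$ and $N(x) = cx + O_\eps(x^{\beta+\eps})$ (and with $\psi(x) = x + O_\eps(x^{\alpha+\eps})$ following since $\alpha > 1/2$ dominates the prime-power and $\mu$-inversion corrections), I would invoke Theorem \ref{th: discretization} to pass to a discrete system. We need $F(x) := \pi$-analogue $= \pi_{\N}(x) + (\text{correction})$ to satisfy the Chebyshev upper bound $F(x) \ll x/\log x$, which holds because $R(x) = O(x^\alpha)$ with $\alpha < 1$; then Theorem \ref{th: discretization} gives primes $\MP$ with $\abs{\pi_{\MP}(x) - F(x)} \le 2$, so the primes remain $\alpha$-well-behaved (the $O(1)$ discretization error is negligible against $x^\alpha$). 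For the integers, the standard argument is to write $N_{\MP}(x) - N(x)$ via the contour/Perron formula in terms of $\zeta_{\MP}(s) - \zeta(s)G(s)$, and to bound the latter using \eqref{eq: bound exp sums}: the error introduced by discretization on $N_{\MP}$ is $O_\eps(x^{1/2+\eps})$, which is why we may only hope for $\beta \ge 1/2$ — but here we already have $\beta < 1/2$! This is the crucial tension, and the resolution must be that for the perturbed system we do \emph{not} discretize the whole $\Pi$: instead we keep the rational primes $\mathbb{P}$ exactly and only discretize the small perturbation. That is, $\MP = \mathbb{P} \cup \MP'$ where $\MP'$ is a discrete approximation (via Theorem \ref{th: discretization} applied to the function $R$, or its positive part) to the measure $\dif R$ of size $O(x^\alpha)$; since the Chebyshev-type bound needed is now $R(x) \ll x^\alpha \ll x/\log x$, the theorem applies, and the discretization error on the integer side becomes $O(\sqrt{x^{\alpha}}\cdot(\dots)) = O_\eps(x^{\alpha/2 + \eps})$-type — small enough precisely when $\alpha/2$ (and the convolution with the $\N$-density, contributing the other factor) lands below $\beta$. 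Reconciling these exponents, $\alpha < 2/3$ and $\beta \ge 2\alpha/(\alpha+2)$, with the sizes coming out of \eqref{eq: bound exp sums} convolved against $N_{\N}(x/u) \asymp x/u$, is the second main computation.

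I expect the main obstacle to be exactly this bookkeeping of the integer-counting error after discretizing only the perturbation: one must track how the square-root-type error $\sqrt{x} + \sqrt{x\log(\abs{t}+1)/\log(x+1)}$ in \eqref{eq: bound exp sums}, now applied to a function $F$ of size $x^\alpha$ rather than $x$, propagates through the Perron-type integral $N_{\MP}(x) = \frac{1}{2\pi\I}\int \zeta_{\mathbb{P}}(s)\,\widehat{\dif\MP'}(s)\,x^s\frac{\dif s}{s}$ against the already-known behavior of $\zeta$ on $\Re s = 1/2$ (which is where RH enters, via $\zeta(1/2+\I t) \ll (\abs t+1)^{\eps}$ or a convexity bound). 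Getting the frequency of the oscillation in $R$ optimal — large enough that the convolution with the integer density gains a full power-of-$u$ cancellation, small enough that the exponential-sum error \eqref{eq: bound exp sums} with $t$ at that frequency is still controlled — is what pins down the constraint $\beta \ge 2\alpha/(\alpha+2)$, and is likely where the condition $\alpha < 2/3$ becomes unavoidable. The special case $\alpha = 1/2$, $0 \le \beta < 1/2$ should come out more simply: there we do not even need a genuine oscillation of a fixed size, only a perturbation $R(x) = O_\eps(x^{1/2+\eps})$ that is honestly of order $x^{1/2}$ (so the prime exponent stays $1/2$) while being "smooth enough" — e.g.\ $\dif R$ supported on a sparse set with density chosen so the convolution against $x/u$ is $O_\eps(x^{\beta+\eps})$ — and one checks the discretization of $R$ via Theorem \ref{th: discretization} costs only $O(\sqrt{x^{1/2}})= O(x^{1/4})$, harmless for any $\beta \ge 1/4$, with a separate (softer) argument needed to push $\beta$ all the way down to $0$.
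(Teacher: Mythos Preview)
There is a genuine obstruction that breaks your plan for $\alpha>1/2$. You propose $\MP=\mathbb{P}\cup\MP'$, i.e.\ only \emph{adding} primes. But then $\Pi_{\MP'}$ is non-negative, and if the system is to have prime exponent $\alpha>1/2$ you need $\Pi_{\MP'}(x)=\Omega(x^{\alpha-\eps})$ (since under RH the classical part contributes only $O_\eps(x^{1/2+\eps})$). Landau's theorem then forces $\log\zeta_{\MP'}(s)$ to have a singularity at its real abscissa $s=\alpha$; as $\zeta_{\MP'}(\sigma)\ge1$ for $\sigma>\alpha$, this cannot become a zero of $\zeta_{\MP'}$, so $\zeta_{\MP}(s)=\zeta(s)\zeta_{\MP'}(s)$ is singular at $s=\alpha$, which is incompatible with $N_{\MP}(x)=ax+O_\eps(x^{\beta+\eps})$ for any $\beta<\alpha$. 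Your ``oscillating $R$'' does not escape this: Theorem~\ref{th: discretization} requires a non-decreasing $F$, so discretizing forces you to work with the positive part, and once $\MP'$ is an honest set of extra primes the Landau argument applies regardless of any intended cancellation in the continuous template. The paper confronts this head-on and instead \emph{deletes} primes: it applies Theorem~\ref{th: discretization} to the measure $\dif F(u)=u^{\alpha-1}\dif\pi(u)$ (plus a small correction), whose support lies in $\mathbb{P}$, to select a subset $\mathcal{P}_{\mathcal S}\subset\mathbb{P}$ with $\pi_{\mathcal S}(x)\sim\Li(x^{\alpha})$; removing $\mathcal{P}_{\mathcal S}$ creates a \emph{zero} rather than a pole near $\Re s=\alpha$, which is harmless for the integers.

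For the $\alpha=1/2$ case your sketch is also off-track (and by your own admission incomplete below $\beta=1/4$). No discretization is needed at all: the paper simply takes $\MP_{\beta}=\mathbb{P}\cup\mathbb{P}^{1/\beta}$, so that $\zeta_{\MP_\beta}(s)=\zeta(s)\zeta(s/\beta)$ acquires a pole at $s=\beta$, and a direct hyperbola-method count gives $N_{\beta}(x)=\zeta(1/\beta)x+\zeta(\beta)x^{\beta}+O(x^{\beta/(\beta+1)})$ for every $0<\beta<1/2$. The same device supplies the $\beta$-oscillation in the $\alpha>1/2$ case, where the final system is $\mathbb{P}\cup\mathbb{P}^{1/\beta}\setminus\mathcal{P}_{\mathcal S}$. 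Finally, the thresholds you speculate about have different sources than you guess: $\alpha<2/3$ arises from controlling an auxiliary error measure supported on $\mathbb{P}$ (one needs $2\alpha-1<\alpha/2$), and $\beta\ge 2\alpha/(\alpha+2)$ drops out of the hyperbola method applied to $1_{\N}\ast\mu_{\mathcal S}$ with the input $M_{\mathcal S}(x)\ll_\eps x^{\alpha/2+\eps}$, not from any frequency optimization.
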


After a short preliminary section where we recall some basic notions of the logarithmic integral, we start with the proof of Theorem \ref{th: case I and II}. The two cases $1/2\le \beta\le \alpha$ and $\alpha<\beta$ require different approaches. In the first case, we prescribe certain zeros and poles for the Beurling zeta function, while in the second case, we prescribe a certain kind of extreme growth. These two cases are covered in Section \ref{sec:betalessalpha} and \ref{sec:alphalessbeta} respectively. In Section \ref{sec:betalesshalf} we prove Theorem \ref{wbs: thrh} by perturbing the system $(\mathbb{P}, \N)$. A schematic overview of the covered cases can be found in Figure \ref{schematic overview}.

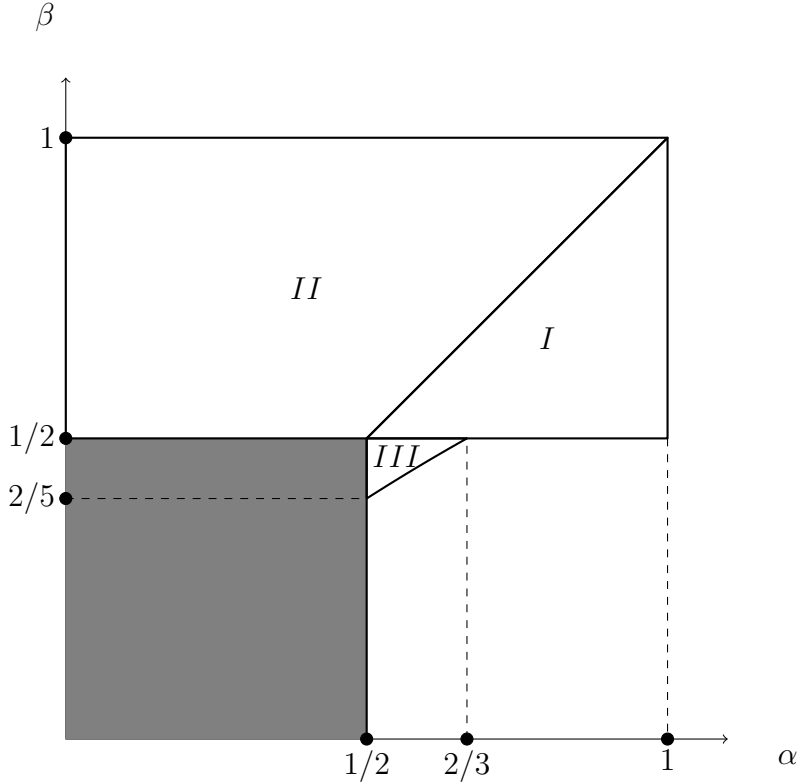
\begin{figure}\label{schematic overview}
\begin{tikzpicture}[scale=8, line join=round]
\draw [<->] (0,1.1) -- (0,0) -- (1.1,0);
\node[left] at (0, 1.2) {$\beta$};
\node[below] at (1.2, 0) {$\alpha$};

\draw[gray, fill=gray] (0,0) -- (1/2,0) -- (1/2,1/2) -- (0,1/2) -- (0,0);
\draw[thick, domain= (1/2:2/3)] plot(\x, {2*\x/(\x+2)}) -- (1/2,1/2) -- (1/2, 2/5);
\draw[thick] (0,1) -- (1,1) -- (1/2,1/2) -- (0,1/2) -- (0,1);
\draw[thick] (1/2, 1/2) -- (1,1) -- (1,1/2) -- (1/2,1/2);
\draw[thick] (1/2,0) -- (1/2,1/2);

\node at (2/5,3/4) {$II$};
\node at (4/5,2/3) {$I$};
\node at (0.55, 0.47) {$III$};

\draw[fill] (0,1) circle [radius=0.01];
\node[left] at (0,1) {$1$};
\draw[fill] (0,1/2) circle [radius=0.01];
\node[left] at (0,1/2) {$1/2$};
\draw[fill] (1,0) circle [radius=0.01];
\node[below] at (1,0) {$1$};
\draw[fill] (1/2,0) circle [radius=0.01];
\node[below] at (1/2,0) {$1/2$};
\draw[fill] (2/3,0) circle [radius=0.01];
\node[below] at (2/3,0) {$2/3$};
\draw[fill] (0,2/5) circle [radius=0.01];
\node[left] at (0,2/5) {$2/5$};

\draw[dashed] (1,1/2) -- (1,0);
\draw[dashed] (2/3,0)--(2/3, 1/2);
\draw[dashed] (0,2/5)--(1/2,2/5);
\end{tikzpicture}
\caption{Schematic overview of the constructed well-behaved systems. The gray region is not possible due to Hilberdink's result. The systems constructed in Section \ref{sec:betalessalpha} correspond to region $I$, those from Section \ref{sec:alphalessbeta} to region $II$, and those constructed in Section \ref{sec:betalesshalf} under RH correspond to region $III$.}
\end{figure}

We recall some classical notation that we will use throughout the paper. We let $\MM\{\mathrm{d}F;s\}$ denote the Mellin--Stieltjes transform of a function $F$ of locally bounded variation, that is, $\MM\{\dif F;s\} = \int^{\infty}_{1-} x^{-s} \dif F(x)$, wherever the integral converges. We denote the set of classical prime numbers by $\mathbb{P} = \{2,3,5, \dots \}$ and the set of positive integers by $\N=\{1,2,3,\dotsc\}$. We use the Vinogradov symbol $\ll$ and $O$-notation. Typically, if we write $f(x) \ll g(x)$, the implicit constant only has to be independent of $x$, but is allowed to depend on other parameters appearing in the context. If we do want to emphasize the dependence on an external parameter, we sometimes explicitly mention it in the notation. We write $f(x) = \Omega (g(x))$ if there exist $\eps > 0$ and a sequence $x_n \rightarrow \infty$ such that $|f(x_n)| \geq \eps g(x_n)$.

\section{The logarithmic integrals}
First we recall the definition of the logarithmic integral $\Li$ and that of the related function $\li$: 
\begin{align}\label{eq: Lidef}
	\Li(x) 	&\coloneqq  \int_{1}^{x}\frac{1-u^{-1}}{\log u}\dif u = \sum_{n=1}^{\infty}\frac{1}{n\cdot n!}(\log x)^{n}, \\ \label{eq: lidef}
	\li(x) 		&\coloneqq \sum_{k=1}^{\infty}\frac{\mu(k)}{k}\Li(x^{1/k}) = \sum_{n=1}^{\infty}\frac{1}{n \cdot n!\zeta(n+1)}(\log x)^{n},	
\end{align}
where $\mu$ is the ordinary M\"obius function and $\zeta$ is the Riemann zeta function. We now record some basic properties of these two functions. By M\"obius inversion, we have
\begin{equation}\label{eq:Dirichletinversion}
	\Li(x) = \sum_{k=1}^{\infty}\frac{1}{k}\li(x^{1/k}),
\end{equation}
so $\Li$ relates to $\li$ as $\Pi$ relates to $\pi$.

Note that these functions actually define holomorphic functions on the Riemann surface $\Sigma$ of $\log$. In particular, $\Li(x^z)$ and $\li(x^z)$ have a suitable interpretation for all $z \in \C$. We have
\begin{align}
\label{eq: def Li(x^z)}
	\Li(x^z) & \coloneqq \sum_{n=1}^{\infty}\frac{z^{n}}{n\cdot n!}(\log x)^{n} = \int_{1}^{x}\frac{u^{z-1}-u^{-1}}{\log u}\dif u = \int_{1}^{x^z}\frac{1-v^{-1}}{\log v}\dif v,\\
\label{eq: def li(x^z)}
	\li(x^z) & \coloneqq \sum_{k=1}^{\infty}\frac{\mu(k)}{k}\Li(x^{1/k}) = \sum_{n=1}^\infty \frac{z^n}{n \cdot n! \zeta(n+1)} (\log x)^n,
\end{align}
where the last integral in \eqref{eq: def Li(x^z)} is taken along any path on $\Sigma$ connecting $1$ and $x^z\coloneqq\exp(z\log x)$. Thus,
\begin{equation}\label{eq:lixzprime}
	x \log x \frac{\dif}{\dif x} \li(x^z)= x \log x \sum_{k=1}^\infty \frac{\mu(k)}{k} \od{}{x} \Li(x^{z/k}) = \sum_{k=1}^\infty \frac{\mu(k)}{k} (x^{z/k}-1),
\end{equation}
and the series expansion converges locally uniformly as $|e^w-1|\le e^{|w|}-1\le 2|w|$ for $|w|\le 1$.
Below we will use the last expression in \eqref{eq: def li(x^z)} for $\li(x^z)$, 
as well as the next formula obtained by termwise differentiation:
\begin{equation}\label{eq:liprimeTaylor}
	x \log x \od{}{x} \li(x^z)=\sum_{n=1}^\infty \frac{z^n}{n! \zeta(n+1)} (\log x)^n.
\end{equation}
In particular, for any fixed value $0<\de<1$, we find from this last series expansion the estimate
\begin{equation}\label{eq:liprimexdelta}
	\od{}{x} \li(x^\de) \ge \frac{1}{x \log x} \left( \frac{1}{\zeta(2)} \sum_{n=1}^\infty \frac{\de^n}{n!} (\log x)^n \right) > \frac{ \de}{\zeta(2) x}  > \frac{\de}{2 x}, \qquad x > 1.
\end{equation}

\bigskip

We now calculate the Mellin--Stieltjes transform of $\Li$.
First,
\[
	\MM\{\dif \Li;s\} = \int_1^\infty x^{-s}\dif \Li(x) = \int_1^\infty \frac{x^{-s}-x^{-s-1}}{\log x} \dif x.
\]
It follows that its derivative is $\int_1^\infty (-x^{-s}+x^{-s-1}) \dif x = \frac{1}{s}-\frac{1}{s-1}$ for $\Re s > 1$, hence $\MM\{\dif \Li; s\}=\log s -\log(s-1)+c$ for some $c$. Considering the limit for $s\to \infty$, say along the positive real line, we find $c=0$ and $\MM\{\dif\Li;s\}=\log\frac{s}{s-1}$. Also, $\MM\{\dif\Li(x^z);s\}=\MM\{\dif\Li;s/z\}= \log\frac{s}{s-z}$, which clearly holds for real, positive $z$ by a change of variables, and hence for all $z\in\C$, as for fixed $s$ the map $z\mapsto \MM\{\dif\Li(x^{z});s\}$ is analytic.

\section{The case $1/2\le \beta\le \alpha$}
\label{sec:betalessalpha}

As explained in the introduction, our strategy consists of first constructing template systems in the extended sense, which we then \emph{discretize} by means of Theorem \ref{th: discretization}. The idea is to prescribe suitable poles and zeros for the zeta function. A pole at $s=\om$ of $\zeta_{\MP}(s)$ produces an oscillation of size roughly $x^{\Re \om}$ in both the prime and integer-counting functions, and a zero at $s=\rho$ produces an oscillation of size roughly $x^{\Re \rho}$ in the prime-counting function only. These poles and zeros are introduced by adding the functions $\li(x^{\om})$ and $-\li(x^{\rho})$ to the template prime counting function. First we need a technical lemma which assures that we can always make such a template function increasing by adding a suitably large multiple of $\li(x^{\delta})$ for some small $\delta>0$.

This lemma involves \emph{multisets}. A multiset $\mathcal{A} = (A,m)$ consists of a subset $A$ of $\C$ equipped with a multiplicity function $m:A\to\N$. We say that the multiset is \emph{symmetric} if $\overline{A} = A$ and $m(\overline{a})=m(a)$ for every $a\in A$. In summing over multisets, we sum respective terms according to their multiplicities. Similarly for products. For example, the size of the multiset $\mathcal{A}$ is given by $|\mathcal{A}| = \sum_{a\in\mathcal{A}}1 = \sum_{a\in A}m(a)$, and $\prod_{a\in\mathcal{A}}(s-a) = \prod_{a\in A}(s-a)^{m(a)}$.

\begin{lemma}
\label{lem:increase}
Let $0<\de<1$ and let $\SSS, \RR$ be two finite, symmetric multisets satisfying $\Re \om, \Re \rho \in (0,1)$ for all $\om \in\SSS, \rho\in \RR$. Then there exists a positive number $M_0\coloneqq M_0(\de;\SSS,\RR)$, such that for any $M\ge M_0$ the function $F: [1,\infty) \to \R$, defined as
\begin{equation}\label{eq:Fdef}
F(x)\coloneqq \li(x) +  \sum_{\om \in\SSS}\li(x^{\om}) - \sum_{\rho\in\RR}\li(x^{\rho}) + M\li(x^{\delta}),
\end{equation}
is strictly increasing.
\end{lemma}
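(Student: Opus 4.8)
The plan is to show that $F$ is continuous on $[1,\infty)$ with $F(1)=0$ and that $F'(x)>0$ for every $x>1$; this immediately gives that $F$ is strictly increasing. Continuity, and in fact real-analyticity on $(1,\infty)$, is clear since each $\li(x^z)$ is a power series in $\log x$ converging for all $x>0$, cf.\ \eqref{eq: def li(x^z)}; that $F$ is real-valued uses the symmetry of $\SSS$ and $\RR$ together with $\li(x^{\overline z})=\overline{\li(x^z)}$ for real $x\ge1$ (the Taylor coefficients in \eqref{eq: def li(x^z)} are real up to the factor $z^n$). I would then set $h(x)\coloneqq\od{}{x}\li(x^{\de})$ and $g(x)\coloneqq\od{}{x}\li(x)+\sum_{\om\in\SSS}\od{}{x}\li(x^{\om})-\sum_{\rho\in\RR}\od{}{x}\li(x^{\rho})$, so that $F'(x)=g(x)+Mh(x)$. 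The term $h$ is harmless: by \eqref{eq:liprimexdelta} we have $h(x)\ge\de/(2x)>0$ for $x>1$, and $h$ is continuous and strictly positive on all of $[1,\infty)$, with $h(1)=\de/\zeta(2)$ read off from \eqref{eq:liprimeTaylor}.

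The crux is to prove that $g(x)>0$ once $x$ is large enough, with a threshold $X_0=X_0(\SSS,\RR)$ depending only on the multisets (not on $\de$ or $M$). Here I would multiply through by $x\log x$. From \eqref{eq:liprimeTaylor} and the bounds $1\le\zeta(n+1)\le\zeta(2)$ for $n\ge1$ one gets $x\log x\,\od{}{x}\li(x)\ge\zeta(2)^{-1}\sum_{n\ge1}(\log x)^n/n!=(x-1)/\zeta(2)$. For the remaining terms, with $z$ such that $\Re z=\theta\in(0,1)$, I would use \eqref{eq:lixzprime}, which gives $x\log x\,\od{}{x}\li(x^z)=\sum_{k\ge1}\frac{\mu(k)}{k}(x^{z/k}-1)$, and estimate this sum by splitting at $k\asymp\abs z\log x$: for the small values of $k$ bound $\abs{x^{z/k}-1}\le x^{\theta/k}+1\le2x^{\theta}$, so these contribute $\ll_z x^{\theta}\log\log x$ via the harmonic partial sum; for $k$ above the cut, $\abs{(z\log x)/k}<1$, so $\abs{x^{z/k}-1}=\abs{e^{(z\log x)/k}-1}\le2\abs z\log x/k$, and these contribute $\ll1$. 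This yields $\abs{x\log x\,\od{}{x}\li(x^z)}\ll_z x^{\theta}\log\log x$ for $x$ large. Since $\SSS,\RR$ are finite and the largest real part occurring in them, call it $\theta_0$, is $<1$, summing over $\SSS\cup\RR$ gives $x\log x\,g(x)\ge(x-1)/\zeta(2)-C_{\SSS,\RR}\,x^{\theta_0}\log\log x\to\infty$, so $g(x)>0$ for all $x\ge X_0$. In particular $F'(x)=g(x)+Mh(x)>0$ for every $x\ge X_0$ and every $M\ge0$.

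The leftover range $1<x\le X_0$ is handled by compactness. The functions $g$ and $h$ are continuous on the compact interval $[1,X_0]$, and $h>0$ there, so $c_1\coloneqq\min_{[1,X_0]}h>0$, while $g$ attains some minimum $-C_2$ on $[1,X_0]$. Hence $F'(x)=g(x)+Mh(x)\ge-C_2+Mc_1$ on $(1,X_0]$, and taking $M_0\coloneqq M_0(\de;\SSS,\RR)$ to be any number larger than $\max\{0,\,C_2/c_1\}$ forces $F'>0$ there for every $M\ge M_0$. Combined with the previous paragraph, $F'(x)>0$ for all $x>1$ whenever $M\ge M_0$, which completes the proof.

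I expect the only genuinely delicate point to be the estimate $\abs{x\log x\,\od{}{x}\li(x^z)}\ll_z x^{\Re z}\log\log x$ for $\Re z\in(0,1)$ — morally the statement that a prescribed zero or pole of real part strictly below $1$ perturbs $\od{}{x}\li(x)$ only by a lower-order amount. The subtlety is the conditional convergence of $\sum_k\mu(k)/k$, which rules out a crude termwise bound and forces the dyadic-type splitting at $k\asymp\abs z\log x$ indicated above. A cheaper version of the estimate (say, with an extra power of $\log x$) would still be more than enough for the argument; everything else reduces to the cited identities \eqref{eq:liprimexdelta}, \eqref{eq:liprimeTaylor}, \eqref{eq:lixzprime} and elementary continuity and compactness.
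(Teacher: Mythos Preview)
Your proof is correct and follows essentially the same approach as the paper: both show $F'(x)>0$ by proving that $x\log x\,\od{}{x}\li(x^{z})$ is at most $O(x^{\Re z}\log\log x)$ via the same splitting of the series \eqref{eq:lixzprime} at $k\asymp|z|\log x$, so that the main term $\od{}{x}\li(x)$ dominates for large $x$, and then handle the compact initial interval using the positivity of $\od{}{x}\li(x^{\de})$ from \eqref{eq:liprimexdelta}. The only cosmetic difference is that the paper isolates the $k=1$ term $(x^{z}-1)$ before estimating the tail, obtaining $F_{0}'(x)x\log x\ge(x-1)+O(x^{q}+\sqrt{x}\log\log x)$, whereas you bound $\abs{x\log x\,\od{}{x}\li(x^{z})}$ directly and use the weaker lower bound $(x-1)/\zeta(2)$ for the main term; either way the conclusion is the same.
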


\begin{proof}
Recalling the series expansion \eqref{eq: def li(x^z)}
, we note that $F$ is real-valued due to $\SSS$ and $\RR$ being symmetric. Let $Q\ge 1$ be a constant for which $|\om|\le Q$ and $|\rho|\le Q $ for all $\om \in\SSS$ and
$\rho\in \RR$. Further, denote $L \coloneqq |\SSS| +|\RR|$. Applying \eqref{eq:lixzprime} with any $|z|\le Q$, $0<\Re z \le 1$, we obtain
\begin{align*}
	\left| x \log x \dod{}{x} \li(x^z) -(x^z-1) \right| 	
		& \le  2 x^{\Re z/2} \sum_{k=2}^{\lfloor Q \log x\rfloor} \frac{1}{k} + \sum_{k=\lfloor Q \log x\rfloor+1}^\infty \frac{1}{k} \abs[1]{x^{z/k}-1}\\
		& \ll x^{\Re z/2} \log(Q\log x) +  \sum_{k=\lfloor Q \log x\rfloor+1}^\infty \frac{1}{k} \frac{|z|\log x}{k} \ll_Q  \sqrt{x} \log\log x.
\end{align*}
Put $q\coloneqq\max \left(\max_\SSS \Re \om, \max_\RR \Re \rho \right)$. The assumption on the real parts of the elements of the multisets $\SSS$ and $\RR$ entail $q<1$. Writing $F_{0}(x)\coloneqq\li(x) +  \sum_{\om \in\SSS}\li(x^{\om}) - \sum_{\rho\in\RR}\li(x^{\rho})$, the above leads to
\[
	F_{0}'(x)x\log x \ge (x-1) + O_{Q,L}(x^{q} + \sqrt{x} \log \log x).
\]
The right-hand side is eventually positive. So, by taking $M_0$ sufficiently large, we can force that $F'$ is positive on the initial segment too, see \eqref{eq:liprimexdelta}, and therefore also on the whole half-line $[1,\infty)$.
\end{proof}

We now come to the main result of this section which establishes the existence of Beurling systems having a zeta function with prescribed zeros and poles.
It generalizes Theorem 7.4 of \cite{Revesz2022} which dealt only with a multiset of zeros, that is $\SSS=\varnothing$ in the below terminology. This was thus implicitly also a construction of an $[\alpha,\beta]$-system with any given $1/2\le \beta<1$ and some corresponding, unspecified $\alpha \le 1/2$. Here we present the result when both a set of zeros $\RR$ and a set of poles $\SSS$ are prescribed.

\begin{theorem}\label{th: finitepresribtion}
Let $\RR$, $\SSS = (S,m)$ be two finite, symmetric, disjoint multisets and $\Re \om, \Re \rho \in (0,1)$ for all $\om \in\SSS, \rho\in \RR$. Then for each $0<\delta<1/2$ there exists a Beurling number system $(\MP, \MN)$ with the following properties:
\begin{enumerate}
	\item The Chebyshev prime-counting function satisfies
	\[
		\psi_{\MP}(x) = x + \sum_{\om \in \SSS}\frac{x^{\om}}{\om} - \sum_{\rho\in \RR}\frac{x^{\rho}}{\rho} + O(x^{\delta}), \quad x \ge 1.
	\]
	\item The integer-counting function satisfies\footnote{We emphasize that in the sum below, we sum over the underlying set $S$ and not over the multiset $\SSS$. The function $m$ is the multiplicity function of the multiset $\SSS$.}
	\[
		N_{\MP}(x) = ax + \sum_{\substack{\omega\in S\\ \Re \omega > 1/2}}x^{\om}\sum_{j=0}^{m(\omega)-1}b_{\omega, j}(\log x)^{j} + O\bigl\{x^{1/2}\exp\bigl(c(\log x)^{2/3}\bigr)\bigr\}, \quad x \ge 2,
	\]
	for certain constants $a>0$, $b_{\omega, j}\in \C$ and $c>0$, with $b_{\overline{\om}, j}=\overline{b_{\om,j}}$ and $b_{\om,m(\om)-1}\neq0$.
\end{enumerate}
\end{theorem}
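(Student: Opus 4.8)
The plan is to realize the system in two stages, as announced in the introduction: first build a \emph{template} prime‑counting function whose Mellin--Stieltjes transform exponentiates to a zeta function with the prescribed poles and zeros, then discretize it with Theorem~\ref{th: discretization}, and finally recover the two asymptotic formulas by comparing $\zeta_{\MP}$ with the template zeta function. For the template, fix the given $\delta\in(0,1/2)$ and use Lemma~\ref{lem:increase} to choose $M\ge M_{0}(\delta;\SSS,\RR)$ so that the function $F$ of \eqref{eq:Fdef} is strictly increasing. Since $\li(x)\sim x/\log x$ dominates and each $\li(x^{z})$ with $\Re z<1$ is $O_{z}(x^{\Re z})=o(x/\log x)$, this $F$ is continuous, vanishes at $1$, tends to $\infty$, and satisfies $F(x)\ll x/\log x$, so Theorem~\ref{th: discretization} produces a Beurling system $(\MP,\MN)$ with $\abs{\pi_{\MP}(x)-F(x)}\le 2$ and the exponential‑sum bound \eqref{eq: bound exp sums}. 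Writing $\FF(x)=\sum_{k\ge1}\tfrac1k F(x^{1/k})$ and applying the complexified Möbius identity $\Li(x^{z})=\sum_{k}\tfrac1k\li(x^{z/k})$ termwise gives $\FF(x)=\Li(x)+\sum_{\om\in\SSS}\Li(x^{\om})-\sum_{\rho\in\RR}\Li(x^{\rho})+M\Li(x^{\delta})$; hence, by the formula $\MM\{\dif\Li(x^{z});s\}=\log\frac{s}{s-z}$ established above, the template zeta function $\zeta_{F}(s)\coloneqq\exp\MM\{\dif\FF;s\}$ is the explicit rational function
\[
	\zeta_{F}(s)=\frac{s}{s-1}\cdot\frac{\prod_{\om\in\SSS}\bigl(s/(s-\om)\bigr)^{m(\om)}}{\prod_{\rho\in\RR}\bigl(s/(s-\rho)\bigr)^{m(\rho)}}\cdot\Bigl(\frac{s}{s-\delta}\Bigr)^{M},
\]
with a simple pole at $s=1$, poles of order $m(\om)$ at the $\om\in S$, a pole at $\delta<1/2$, and zeros at the $\rho\in\RR$.

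Part~(1) I would obtain first and cheaply. From $\psi_{\MP}(x)=\int_{1^{-}}^{x}\log u\,\dif\Pi_{\MP}(u)$ and $\abs{\Pi_{\MP}(x)-\FF(x)}\ll\log x$ (immediate from $\abs{\pi_{\MP}-F}\le2$), integration by parts yields $\psi_{\MP}(x)=\int_{1}^{x}\log u\,\dif\FF(u)+O((\log x)^{2})$; and $\dif\Li(u^{z})=\frac{u^{z-1}-u^{-1}}{\log u}\dif u$ from \eqref{eq: def Li(x^z)} gives $\int_{1}^{x}\log u\,\dif\Li(u^{z})=x^{z}/z-1/z-\log x$, so that $\psi_{\MP}(x)=x+\sum_{\om\in\SSS}x^{\om}/\om-\sum_{\rho\in\RR}x^{\rho}/\rho+(M/\delta)x^{\delta}+O((\log x)^{2})$, which is assertion~(1) since $\delta<1$.

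Part~(2) is the heart of the matter. For $\Re s>1$ one has $\log\zeta_{\MP}(s)=\log\zeta_{F}(s)+E(s)$ with $E(s)=\MM\{\dif(\Pi_{\MP}-\FF);s\}=\sum_{k\ge1}\tfrac1k\MM\{\dif(\pi_{\MP}-F);ks\}$, and the decisive step is to use \eqref{eq: bound exp sums} to continue $E$ to $\Re s>1/2$ with a controlled rate of blow‑up. Writing $G_{t}(x)=\int_{1^{-}}^{x}u^{-\I t}\dif(\pi_{\MP}-F)(u)$, \eqref{eq: bound exp sums} gives $\abs{G_{t}(x)}\ll\sqrt{x}+\sqrt{x\log(\abs t+1)/\log(x+1)}$; integrating by parts in $\MM\{\dif(\pi_{\MP}-F);k\sigma+\I kt\}=k\sigma\int_{1}^{\infty}x^{-k\sigma-1}G_{kt}(x)\,\dif x$ (the boundary term vanishes because $k\sigma\ge\sigma>1/2$) and then summing over $k$ should show that $E$ is holomorphic for $\Re s>1/2$ with $\abs{E(s)}\ll(\sigma-\tfrac12)^{-1}+\sqrt{\log(\abs t+2)}\,(\sigma-\tfrac12)^{-1/2}$, where $\sigma=\Re s$ (the $k\ge2$ terms are summable since then $k\sigma-\tfrac12\gg k$). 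Consequently $\zeta_{\MP}=\zeta_{F}\e^{E}$ is meromorphic on $\Re s>1/2$ with exactly the poles of $\zeta_{F}$ there (at $1$ and the finitely many $\om\in S$, since $\delta<1/2$) and $\abs{\zeta_{\MP}(\sigma+\I t)}\ll\exp O\bigl((\sigma-\tfrac12)^{-1}+\sqrt{\log(\abs t+2)}\,(\sigma-\tfrac12)^{-1/2}\bigr)$ away from the poles. I would then feed this into Perron's formula for $N_{\MP}$ — e.g.\ in the form $\sum_{n_{j}\le x}(1-n_{j}/x)=\frac1{2\pi\I}\int_{(c)}\zeta_{\MP}(s)\frac{x^{s}}{s(s+1)}\dif s$, which avoids truncation issues, after which $N_{\MP}(x)$ is recovered by the usual de‑averaging using monotonicity of $N_{\MP}$ — and shift the contour to the line $\Re s=\tfrac12+\eta$ with $\eta=(\log x)^{-1/3}$, detouring around the finitely many poles lying in $\tfrac12<\Re s\le\tfrac12+\eta$ or on $\Re s=\tfrac12$, whose total contribution is absorbed since $x^{1/2+\eta}\ll x^{1/2}\e^{(\log x)^{2/3}}$. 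Crossing $s=1$ yields the main term $ax$ with $a=\res_{s=1}\zeta_{\MP}$; crossing each $\om\in S$ with $\Re\om>\tfrac12$ yields $x^{\om}\sum_{j=0}^{m(\om)-1}b_{\om,j}(\log x)^{j}$; and on the shifted contour the integrand is $\ll x^{1/2+\eta}\e^{O(1/\eta+\sqrt{\log x}/\sqrt\eta)}$ against an integrable weight, so the choice $\eta=(\log x)^{-1/3}$ — which balances $\eta\log x$ against $\sqrt{\log x}/\sqrt\eta$, both of order $(\log x)^{2/3}$ — produces the remainder $O(x^{1/2}\e^{c(\log x)^{2/3}})$ (the range of small $x$ being trivial).

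Finally I would check the side statements: $a=\res_{s=1}\zeta_{\MP}=\e^{E(1)}(1-\delta)^{-M}\prod_{\om\in\SSS}(1-\om)^{-m(\om)}\prod_{\rho\in\RR}(1-\rho)^{m(\rho)}>0$, because $\SSS$ and $\RR$ are symmetric (so both products are positive reals, pairing $\om$ with $\overline\om$) and $E(1)\in\R$ (as $\Pi_{\MP}-\FF$ is real‑valued); the relation $b_{\overline\om,j}=\overline{b_{\om,j}}$ follows from the reality of $N_{\MP}$ together with the symmetry of $\SSS$; and $b_{\om,m(\om)-1}\neq0$ because $\SSS$ and $\RR$ are disjoint and $\e^{E}$ is nonvanishing, so $\om$ is a pole of $\zeta_{\MP}$ of order exactly $m(\om)$, and its leading Laurent coefficient — which equals $\om\,(m(\om)-1)!\,b_{\om,m(\om)-1}$ — does not vanish. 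The main obstacle is this Part~(2) argument: extracting from the single bound \eqref{eq: bound exp sums} the quantitative meromorphic continuation of $\zeta_{\MP}$ all the way up to $\Re s=\tfrac12$ (in particular keeping the tail $\sum_{k\ge2}$ of $E$ summable with the right dependence on $\sigma-\tfrac12$), and then calibrating the contour so that the error lands precisely on $x^{1/2}\e^{c(\log x)^{2/3}}$ — the exponent $2/3$ being forced by the balance between $\eta\log x$ and $\sqrt{\log x}/\sqrt\eta$. The Perron/de‑averaging bookkeeping is routine once this non‑uniform bound on $\zeta_{\MP}$ is in hand.
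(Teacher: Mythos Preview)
Your approach is essentially the paper's: template $F$ via Lemma~\ref{lem:increase}, discretize via Theorem~\ref{th: discretization}, read off~(1) from $\Pi_{\MP}\approx\FF$, control $E(s)=\log\zeta_{\MP}-\log\zeta_{F}$ through~\eqref{eq: bound exp sums}, and Perron-invert on $\sigma=\tfrac12+(\log x)^{-1/3}$. The one slip is your summability claim for the tail $\sum_{k\ge2}\tfrac1k\MM\{\dif(\pi_{\MP}-F);ks\}$: inserting the bound~\eqref{eq:Rfirstesti} at $ks$ with $k\sigma-\tfrac12\gg k$ still leaves a divergent harmonic sum, and neither $\MM\{\dif\pi_{\MP};ks\}$ nor $\MM\{\dif F;ks\}$ decays in~$k$ (indeed $F$ is strictly positive immediately to the right of~$1$). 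The paper sidesteps this by regrouping the tail as $\MM\{\dif(\Pi_{\MP}-\pi_{\MP});s\}-\MM\{\dif(\FF-F);s\}$ and observing that $\Pi_{\MP}-\pi_{\MP}$ and $\FF-F$ are each nondecreasing and $O(\sqrt x)$, which gives the clean $O\bigl(\sigma/(\sigma-\tfrac12)\bigr)$ directly; with this fix your argument goes through.
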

We note that the associated zeta function $\zeta_{\MP}(s)$ of the system from the proof is given by $\zeta_{\MP}(s) = E(s)\e^{Z(s)}$, where
\begin{equation}\label{eq:Edef}
	E(s) = E_{M}(s) \coloneqq \frac{s}{s-1} \prod_{\om \in\SSS} \left(\frac{s}{s-\om}\right) \prod_{\rho\in\RR} \left(\frac{{s}-{\rho}}{s}\right) \left(\frac{s}{s-\de}\right)^M,
\end{equation}
for a sufficiently large integer $M$, and $Z(s)$ is a function holomorphic on $\Re s >0$ which satisfies
\begin{equation}\label{eq: zetaestimate}
	Z(\sigma+\I t) \ll \frac{\sigma}{\sigma-1/2} + \sigma\sqrt{\frac{\log (|t|+1)}{\sigma-1/2}}, \quad \text{for }\sigma>1/2.
\end{equation}
In particular, $\zeta_{\MP}(s)$ has meromorphic continuation to the half-plane $\Re s>0$, its zeros in this half-plane are precisely the elements of $\RR$ (with matching multiplicities), and its poles in this half-plane are precisely the elements of $\SSS\cup\{1\}\cup\{\delta\}^{M}$.

\begin{proof}

In view of Lemma \ref{lem:increase}, there is $M$ such that the function $F$ from \eqref{eq:Fdef} is strictly increasing. We now apply the random prime construction result of Theorem \ref{th: discretization} for this function $F$, on noting that $F$ is continuous, obviously satisfies $F(1)=0$, and admits a bound $F(x)\ll x/\log x$. To see this last bound, note that the first integral representation in \eqref{eq: def Li(x^z)} yields $\Li(x^{z}) \ll x^{\Re z}/(\Re z\log x)$. Employing this estimate in the first series representation of \eqref{eq: def li(x^z)} subsequently gives
\begin{align*}
	\li(x^{z}) 	
		&\ll \sum_{k\le|z|\log x}\frac{x^{\Re z/k}}{(\Re z)\log x} + \sum_{k>|z|\log x}\frac{\abs[1]{x^{z/k}-1}}{k}\sup_{v\in [1,x^{z/k}]}\abs[3]{\frac{1-v^{-1}}{\log v}}\\
		&\ll \frac{x^{\Re z}}{(\Re z)\log x} + \sum_{k>|z|\log x}\frac{|z|\log x}{k^{2}} \ll \frac{x^{\Re z}}{(\Re z)\log x}.
\end{align*}
Here we used the second integral representation in \eqref{eq: def Li(x^z)} to bound the terms with $k>|z|\log x$, and the bound $|x^{z/k}-1| \ll (|z|/k)\log x$. The estimate $F(x)\ll x/\log x$ now follows from this and \eqref{eq:Fdef}.

Theorem \ref{th: discretization} thus provides a Beurling prime number system $\mathcal{P}$, $1<p_1<\ldots<p_j<\ldots$, that satifies \eqref{eq: bound exp sums} and whose prime-counting function $\pi_{\MP}$ is such that $|\pip-F|\le 2$. We let $\MN$ be the system of integers generated by the primes $\MP$.

By definition, $\Pip(x)\coloneqq\sum_{k=1}^\infty \frac{1}{k} \pip(x^{1/k}) = \sum_{k\le \log x/\log p_1} \frac1{k} \pip(x^{1/k})$, the terms becoming identically zero as soon as $k>\log x/\log p_1$.
We thus find
\begin{align*}
	\Pip(x)	&=\sum_{k\le \log x/\log p_1} \frac1{k} (F(x^{1/k}) +O(1)) =\sum_{k\le \log x/\log p_1} \frac1{k} F(x^{1/k}) + O(\log (\log x/\log p_1))\nonumber\\
			&=\sum_{k=1}^{\infty}\frac{1}{k}F(x^{1/k}) +O\biggl(\sup_{y\in [1,p_{1}]}F'(y)\sum_{k>\frac{\log x}{\log p_{1}}}\frac{x^{1/k}-1}{k} + \log\log x\biggr)\nonumber,
\end{align*}
where we used the mean value theorem to bound $F(x^{1/k}) = F(x^{1/k})-F(1)$ for the remaining part of the series with $x^{1/k} < p_{1}$. As $x^{1/k}-1 \ll \log (x)/k$ for $x^{1/k} < p_{1}$, we obtain
\begin{equation}
\label{eq:Pipx}
	\Pip(x) = G(x)+ O(\log\log x),
\end{equation}
where we have set
\begin{equation} \label{eq:Gdef}
	G(x) \coloneqq \sum_{k=1}^{\infty}\frac{1}{k}F(x^{1/k}) = \Li(x) +  \sum_{\om \in\SSS}\Li(x^{\om}) - \sum_{\rho\in\RR}\Li(x^{\rho}) + M\Li(x^{\delta}),
\end{equation}
and where the latter equality holds in view of \eqref{eq:Dirichletinversion} and \eqref{eq:Fdef}.

Since $\psip(x)\coloneqq\int_1^x \log u \dif\Pip(u)$, here we compute
\[
	\int_1^x \log u \dif \Li(u^z) = \int_1^x \log u \frac{u^{z-1}-u^{-1}}{\log u} \dif u = \frac{x^z - 1}{z}-\log x,
\]
resulting in
\[	
	\psip(x)= x + \sum_{\om \in \SSS}\frac{x^{\om}}{\om} - \sum_{\rho\in \RR}\frac{x^{\rho}}{\rho} + M \frac{x^\de}{\de} + O(\log x \log\log x),
\]
which in particular demonstrates the first assertion.

Next we set to the computation of
\begin{align}\label{eq:logzeta} \notag
\log\zs&=\sum_{p_{j}\in \MP} \log \Bigl(\frac{1}{1-p_{j}^{-s}}\Bigr)=\int_{1}^\infty x^{-s} \dif\Pip(x)=\MM\{\dif\Pip;s\}.
\end{align}
Formula \eqref{eq:Pipx} leads to the approximate evaluation
\[
	\log\zs \approx \MM\{\dif G;s\}=\log\frac{s}{s-1} + \sum_{\om \in\SSS} \log\frac{s}{s-\om} + \sum_{\rho \in\RR} \log\frac{s-\rho}{s} + M\log\frac{s}{s-\de}.
\]
Moreover, the error $Z(s) \coloneqq \MM\{\dif \Pi_{\MP}- \dif G;s\}$ here arises as the Mellin--Stieltjes transform of the error function $\Pip(x)-G(x)$, which is $O(\log\log x)$. Hence $Z(s)$ is analytic for $\Re s>0$.
Recalling the definition \eqref{eq:Edef} of $E(s)$, we have $E(s)=\exp(\MM\{\dif G;s\})$, so that $\zeta_{\MP}(s) = E(s)\e^{Z(s)}$. This already yields the desired analytic behavior of $\zeta_{\MP}(s)$. However, to deduce the asymptotic behavior of the integer-counting function $N_{\MP}(x)$ we need the bound \eqref{eq: zetaestimate} for $Z(s)$.
The modulus of $Z(s)$ is not well-controlled automatically. More precisely, here we need a recourse to the full strength of the error estimates furnished by Theorem \ref{th: discretization} for $\pip(x)$, which we formulate in terms of the auxiliary function
\[
	J(x,t)\coloneqq\sum_{p_j\le x} p_j^{-\I t} - \int_1^x u^{-\I t} \dif F(u)=\int_1^x u^{-\I t} \dif\,(\pip(u)-F(u)).
\]
With this notation, Theorem \ref{th: discretization} gives
\begin{equation}\label{eq:Jfromthms}
	|J(x,t)| \ll \sqrt{x} + \sqrt{x\frac{\log(|t|+2)}{\log(x+2)}}.
\end{equation}
Let us write $Z(s)$ as
\[
	Z(s) = \MM\{\dif\Pi_{\MP}-\dif G;s\} = \MM\{\dif\Pi_{\MP}-\dif \pi_{\MP};s\} + \MM\{\dif\pi_{\MP}-\dif F;s\} - \MM\{\dif G-\dif F;s\}.
\]
The second term here is
\[
	R(s) \coloneqq \MM\{\dif \pip-\dif F;s\} = \int_1^\infty x^{-s} \dif\,(\pip(x)-F(x))=\int_1^\infty x^{-\si} \dif J(x,t),
\]
where the last integral is with respect to the dummy variable $x$, while $t = \Im s$ is fixed. From here partial integration leads to
\[
	R(s)=\left[x^{-\si} J(x,t)\right]_1^\infty +\si \int_1^\infty x^{-\si-1} J(x,t) \dif x=\si \int_1^\infty x^{-\si-1} J(x,t) \dif x
\]
for $\sigma > 1/2$, so that \eqref{eq:Jfromthms} furnishes
\begin{align}
	|R(\sigma+\I t)|
		&\ll \si\int_{1}^{\infty}x^{-\si-1/2}\dif x + \si\sqrt{\log(|t|+2)}\int_{1}^{\infty}x^{-\si-1/2}(\log x)^{-1/2}\dif x \nonumber\\
		&\ll \frac{\si}{\si-1/2} + \frac{\si}{\sqrt{\si-1/2}}\sqrt{\log(|t|+2)}, \quad \si>1/2, \label{eq:Rfirstesti}
\end{align}
upon a change of variables $y=(\si-1/2)\log x$ in the second integral.

For the other two terms we exploit the monotonicity of $\Pip(x)-\pip(x)$ and $G(x)-F(x)$, and the standard estimates $\Pip(x)-\pip(x)\ll \sqrt{x}$, $G(x)-F(x)\ll \sqrt{x}$. This monotonicity may be deduced from the termwise monotonicity of the summands in $\Pip(x)-\pip(x) = \sum_{k=2}^{\infty}\pip(x^{1/k})/k$ and $G(x)-F(x) = \sum_{k=2}^{\infty}F(x^{1/k})/k$. As for the claimed estimates, this follows from $F(x) \ll x/\log x$ as $x\to \infty$ and $F(x) \ll x-1$ as $x\to 1$, so that
\[
	G(x) - F(x) = \sum_{k=2}^{\infty}\frac{F(x^{1/k})}{k} \ll \sum_{2\le k\le \log x}\frac{x^{1/k}}{\log x} + \sum_{k>\log x}\frac{x^{1/k}-1}{k} \ll x^{1/2}.
\]
The estimate for $\Pip(x)-\pip(x)$ can be deduced similarly, the summands in the sum now being zero if $k> \log x/\log p_{1}$.
Using all four of these properties, we obtain
\begin{align*}
	\abs{\MM\{\dif\Pip-\dif\pip;s\}} 	&\le \int_1^\infty x^{-\si} \dif\,(\Pip(x)-\pip(x))\ll \frac{\si}{\si-1/2},\\
	\abs{\MM\{\dif G-\dif F;s\}}		&\le \int_{1}^{\infty}x^{-\si}\dif\,(G(x)-F(x)) \ll \frac{\si}{\si-1/2},
\end{align*}
for $\si>1/2$.
Collecting the above estimates we arrive at \eqref{eq: zetaestimate}, which is, as mentioned above, an essential element for the proof of the theorem's second part.

\medskip

To establish the asymptotic behavior of $N_{\mathcal{P}}(x)$, we perform a Perron inversion in the spirit of \cite[Th.\ 17.11]{DiamondZhangbook}. Since $N_{\mathcal{P}}(x)$ is non-decreasing, we have
\begin{align}
	N_{\mathcal{P}}(x) 	&\le \int_{x}^{x+1}N_{\mathcal{P}}(u)\dif u = \frac{1}{2\pi\I}\int_{2-\I\infty}^{2+\I\infty}\frac{(x+1)^{s+1}-x^{s+1}}{s(s+1)}\zeta_{\MP}(s)\dif s, \label{eq: upper Perron}\\
	N_{\mathcal{P}}(x)	&\ge \mathrlap{\int_{x-1}^{x}N_{\mathcal{P}}(u)\dif u}\phantom{\int_{x}^{x+1}\NP(u)\dif u}
						 = \frac{1}{2\pi\I}\int_{2-\I\infty}^{2+\I\infty}\frac{x^{s+1}-(x-1)^{s+1}}{s(s+1)}\zeta_{\MP}(s)\dif s. \label{eq: lower Perron}
\end{align}
Note that the contour integrals converge absolutely. We limit ourselves to the analysis of the first Perron integral \eqref{eq: upper Perron}, the analysis of the second being similar.

Let $\sigma_{0}$ be a real number in $(\delta,1/2)$ distinct from $\Re \omega$ for each $\om\in S$, let $T_{0}$ be a real number satisfying $T_{0}> \Im \om$ for each $\om\in S$, and let $\sigma_{x}=1/2+(\log x)^{-1/3}$. We transfer the integration contour to the broken line
\begin{align*}
	\Gamma 	&\coloneqq 	(\sigma_{x}-\I\infty, \sigma_{x}-\I T_{0}] \cup [\sigma_{x}-\I T_{0}, \sigma_{0}-\I T_{0}] \cup [\sigma_{0}-\I T_{0}, \sigma_{0}+ \I T_{0}] \\
			&\phantom{\coloneqq {}} \cup [\sigma_{0}+\I T_{0}, \sigma_{x}+\I T_{0}] \cup [\sigma_{x}+\I T_{0}, \sigma_{x}+\I \infty),
\end{align*}
which is justified in view of the bound \eqref{eq: zetaestimate}.
By the residue theorem we get
\begin{align}
	\int_{x}^{x+1}N_{\MP}(u) \dif u
		&=a(x+1/2) + \Biggl[\sum_{\substack{\om\in S\\ \Re \om > \sigma_{0}}}u^{\om+1}\sum_{j=0}^{m(\om)-1}\tilde{b}_{\om, j}(\log u)^{j}\Biggr]^{x+1}_{x} \label{eq: main terms}\\
		&\phantom{= {}} + \frac{1}{2\pi\I}\int_{\Gamma}\frac{(x+1)^{s+1}-x^{s+1}}{s(s+1)}\zeta_{\MP}(s)\dif s, \label{eq: remainder int}
\end{align}
where $a>0$ is the residue of $\zeta_{\MP}(s)$ at $s=1$, and for certain constants $\tilde{b}_{\om,j}$ with $\tilde{b}_{\overline{\om},j} = \overline{\tilde{b}_{\om,j}}$ and $\tilde{b}_{\om, m(\om)-1}\neq0$. Observe that
\begin{align*}	
	\bigl[u^{\om+1}(\log u)^{j}\bigr]^{x+1}_{x} 	&= (\om+1)x^{\om}(\log x)^{j} + jx^{\om}(\log x)^{j-1} + O\bigl(x^{\Re\om-1}(\log x)^{j}\bigr), \quad \text{and also}\\
	\bigl[u^{\om+1}(\log u)^{j}\bigr]^{x}_{x-1} 	&= (\om+1)x^{\om}(\log x)^{j} + jx^{\om}(\log x)^{j-1} + O\bigl(x^{\Re\om-1}(\log x)^{j}\bigr).
\end{align*}	
Hence the main terms in \eqref{eq: main terms} are of the form
\[
	ax + \sum_{\substack{\omega\in S\\ \Re \omega > 1/2}}x^{\om}\sum_{j=0}^{m(\omega)-1}b_{\omega, j}(\log x)^{j} + O\bigl(x^{1/2}(\log x)^{|\SSS|}\bigr)
\]
for certain constants $b_{\omega,j}$ and it remains to show that the integral \eqref{eq: remainder int} admits the error term $O\bigl\{x^{1/2}\exp\bigl(c(\log x)^{2/3}\bigr)\bigr\}$ for some $c>0$.

To do so we shall bound $(x+1)^{s+1}-x^{s+1}$ by $\abs{s}x^{\sigma}$ for $\abs{t}\le x$ and by $O(x^{\sigma+1})$ for $\abs{t}\ge x$. The part of the integral along the three segments with $\abs{t} \le T_{0}$ is clearly $\ll x^{\si_{x}}=x^{1/2}\exp\bigl((\log x)^{2/3}\bigr)$. In view of \eqref{eq: zetaestimate} there is a constant $C>0$ such that
\[
	\abs{Z(\sigma_{x}+\I t)} \le C\bigl((\log x)^{1/3} + (\log x)^{1/6}\sqrt{\log(\,\abs{t}+1)}\bigr),
\]
so that for $\abs{t}\ge T_{0}$,
\[
	|\zeta_{\MP}(\sigma_{x} + \I t)|  \ll \exp\bigl\{C\bigl((\log x)^{1/3} + (\log x)^{1/6}\sqrt{\log(\,\abs{t}+1)}\bigr)\bigr\}.
\]
Now
\begin{align*}
	\int_{T_{0}}^{x}\exp\bigl(C\bigl((\log x)^{1/3} + (\log x)^{1/6}\sqrt{\log t}\bigr)\frac{\dif t}{t} 		&\ll \exp\bigl(2C(\log x)^{2/3}\bigr)(\log x),\quad \text{and} \\
	\int_{x}^{\infty}\exp\bigl(C\bigl((\log x)^{1/3} + (\log x)^{1/6}\sqrt{\log t}\bigr)\frac{\dif t}{t^{2}} 	&\ll \frac{1}{x} \exp\bigl(2C(\log x)^{2/3}\bigr),
\end{align*}
so that indeed
\[
	\frac{1}{2\pi\I}\int_{\Gamma}\frac{(x+1)^{s+1}-x^{s+1}}{s(s+1)}\zeta_{\MP}(s)\dif s \ll x^{1/2}\exp\bigl(c(\log x)^{2/3}\bigr)
\]
for any $c>2C+1$. This finishes the proof.	
\end{proof}

\begin{corollary}\label{cor:alphagreaterbeta} Let $1/2 \le \beta \le \alpha < 1$.
Then there exists an $[\alpha,\beta]$-Beurling system $(\MP,\MN)$.
\end{corollary}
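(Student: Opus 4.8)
The plan is to invoke Theorem \ref{th: finitepresribtion} with carefully chosen finite symmetric multisets $\SSS$ (the poles) and $\RR$ (the zeros), so that the explicit formulas in parts (1) and (2) have their largest oscillatory terms of exactly the right sizes $x^{\alpha}$ and $x^{\beta}$. Concretely, since $1/2 \le \beta \le \alpha < 1$, I would take a single conjugate pair of poles at $\si = \beta$, say $\SSS = \{\beta + \I, \beta - \I\}$ (each with multiplicity one), and a single conjugate pair of zeros at $\si = \alpha$, say $\RR = \{\alpha + \I, \alpha - \I\}$; these are finite, symmetric, disjoint, and have real parts in $(0,1)$, so the hypotheses are met. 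We also fix any $\de$ with $0 < \de < 1/2$, which is harmless since $\de < \beta$.

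With these choices, part (1) of Theorem \ref{th: finitepresribtion} gives $\psi_{\MP}(x) = x + \frac{x^{\beta+\I}}{\beta+\I} + \frac{x^{\beta-\I}}{\beta-\I} - \frac{x^{\alpha+\I}}{\alpha+\I} - \frac{x^{\alpha-\I}}{\alpha-\I} + O(x^{\de})$. The two zero-contributions combine to a term of the form $x^{\alpha}\bigl(c_1\cos(\log x) + c_2 \sin(\log x)\bigr)$ with $(c_1,c_2)\ne(0,0)$, which is $\Omega(x^{\alpha})$ and also $O_\eps(x^{\alpha+\eps})$; the pole-contributions are $O(x^{\beta}) = O(x^{\alpha})$ and the remainder is $O(x^\de)$. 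Hence $\psi_{\MP}(x) = x + O_\eps(x^{\alpha+\eps})$ for every $\eps>0$ and $\psi_{\MP}(x) - x = \Omega(x^\alpha)$, so the primes are exactly $\alpha$-well-behaved. For part (2), since $\Re(\beta\pm\I) = \beta > 1/2$ (when $\beta>1/2$; the boundary case $\beta=1/2$ is handled separately below), the sum $\sum_{\omega\in S,\ \Re\omega>1/2} x^\omega\sum_j b_{\omega,j}(\log x)^j$ equals $x^{\beta}\bigl(b_{\beta+\I,0}x^{\I} + b_{\beta-\I,0}x^{-\I}\bigr) = x^\beta\bigl(d_1\cos(\log x) + d_2\sin(\log x)\bigr)$ with $b_{\beta-\I,0} = \overline{b_{\beta+\I,0}}$ and $b_{\beta+\I,0}\ne 0$, so $(d_1,d_2)\ne(0,0)$; this term is $\Omega(x^\beta)$ and $O(x^\beta)$, while the error term $O\{x^{1/2}\exp(c(\log x)^{2/3})\}$ is $o(x^{\beta+\eps})$ for every $\eps>0$ (indeed $\le x^{1/2+\eps}\le x^{\beta+\eps}$ eventually). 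Therefore $N_{\MP}(x) = ax + O_\eps(x^{\beta+\eps})$ for all $\eps>0$ and $N_{\MP}(x) - ax = \Omega(x^\beta)$, so the integers are exactly $\beta$-well-behaved, and $(\MP,\MN)$ is an $[\alpha,\beta]$-system.

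The one genuinely delicate point is the boundary case $\beta = 1/2$: then the prescribed poles lie \emph{on} the critical line, so they do not appear in the explicit main term for $N_{\MP}(x)$ in part (2), and the error bound $O\{x^{1/2}\exp(c(\log x)^{2/3})\}$ only gives $N_{\MP}(x) = ax + O_\eps(x^{1/2+\eps})$, i.e.\ $\beta \le 1/2$, without the matching lower bound. To force $N_{\MP}(x) - ax = \Omega(x^{1/2})$ one instead uses a pole of higher order: take $\SSS = \{1/2+\I, 1/2-\I\}$ each with multiplicity $2$. Then $\zeta_{\MP}(s)$ has a double pole at $1/2\pm\I$, and the classical Perron/explicit-formula analysis (as in \cite[Th.\ 17.11]{DiamondZhangbook}, and as carried out inside the proof of Theorem \ref{th: finitepresribtion}) shows the residue at such a double pole on the critical line contributes a term $\asymp x^{1/2}\log x \cdot(\text{trigonometric polynomial in }\log x)$ to $N_{\MP}(x)$; since $\exp(c(\log x)^{2/3}) = o(\log x)$, this $\Omega(x^{1/2}\log x)$ term dominates the error, forcing $\beta = 1/2$ exactly. (Alternatively, and more cheaply, one can observe that Hilberdink's bound $\max\{\alpha,\beta\}\ge 1/2$ together with the already-established $\alpha$-well-behavedness with $\alpha\ge 1/2$ shows that if $\beta<1/2$ were to occur, the construction would still qualify as an $[\alpha,\beta']$-system for the actual value $\beta'\ge 1/2$ forced by the lower-order term, but to land \emph{exactly} on $\beta=1/2$ the higher-multiplicity pole is the clean device.) With this modification the case $\beta=1/2$ is covered, completing the proof for the full range $1/2\le\beta\le\alpha<1$.
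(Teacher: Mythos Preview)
Your overall strategy—apply Theorem \ref{th: finitepresribtion} with a pole on $\Re s=\beta$ and a zero on $\Re s=\alpha$—is exactly the paper's approach; the paper simply uses the real points $\SSS=\{\beta\}$, $\RR=\{\alpha\}$ (and $\RR=\varnothing$ when $\alpha=\beta$) rather than complex conjugate pairs. For $\beta>1/2$ and $\alpha>\beta$ your argument is fine.

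There are, however, two genuine gaps. First, when $\alpha=\beta$ your choice gives $\SSS=\RR=\{\alpha+\I,\alpha-\I\}$, so the multisets are \emph{not} disjoint and Theorem \ref{th: finitepresribtion} does not apply. This is easily repaired (use different imaginary parts, or follow the paper and drop $\RR$ altogether since the pole already forces the $\Omega(x^{\alpha})$ oscillation in $\psi_{\MP}$), but as written it is a mistake.

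Second, and more seriously, your treatment of $\beta=1/2$ is incorrect: the inequality $\exp\bigl(c(\log x)^{2/3}\bigr)=o(\log x)$ is false—the left-hand side grows faster than any power of $\log x$. Consequently a double pole on $\Re s=1/2$ contributes a term of size $x^{1/2}\log x$, which is \emph{absorbed} by the error $O\bigl(x^{1/2}\exp(c(\log x)^{2/3})\bigr)$, not the other way around; no finite multiplicity helps here. Your Hilberdink alternative also gives nothing, since $\alpha\ge1/2$ already satisfies $\max\{\alpha,\beta\}\ge1/2$ regardless of $\beta$. The clean argument (the paper's) is to use the pole of $\zeta_{\MP}$ on $\Re s=1/2$: if $N_{\MP}(x)-ax\ll x^{1/2-\eps}$ for some $\eps>0$, then $\zeta_{\MP}(s)-as/(s-1)=s\int_{1}^{\infty}x^{-s-1}(N_{\MP}(x)-ax)\dif x$ converges absolutely and is analytic on $\Re s>1/2-\eps$, contradicting the prescribed pole at $\beta\pm\I$ (or at $\beta=1/2$ in the paper's version).
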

\begin{proof}
This immediately follows from the theorem upon selecting $\RR = \{\alpha\}$, $\SSS=\{\beta\}$ if $\alpha > \beta$, and $\RR=\varnothing$, $\SSS=\{\beta\}$ if $\alpha=\beta$. In the case $\beta=1/2$, the second assertion of the theorem only yields $N_{\MP}(x)  = ax + O\bigl\{x^{1/2}\exp\bigl(c(\log x)^{2/3}\bigr)\bigr\}$, but no oscillation result. However, with the presence of a pole of $\zeta_{\MP}(s)$ at $s=\beta=1/2$, it is clear from $\zeta_{\MP}(s)-as/(s-1)=s\int_1^{\infty}x^{-(s+1)}(N_{\MP}(x)-ax)\dif x$ that $N_{\MP}(x) - ax \ll x^{1/2-\eps}$ cannot hold for any $\eps>0$, as that would make the integral absolutely convergent and the function $\zeta_{\MP}$ analytic around 1/2.
\end{proof}
\begin{corollary}
Let $0\le\alpha<1/2$. Then there exists an $[\alpha, 1/2]$-Beurling system.
\end{corollary}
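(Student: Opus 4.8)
The plan is to separate the endpoint $\alpha=0$ from the range $0<\alpha<1/2$, and to obtain the exact value $\beta=1/2$ of the integer exponent not by producing an explicit oscillation of $N_{\MP}$, but for free from Hilberdink's uncertainty inequality $\max\{\alpha,\beta\}\ge 1/2$.

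For $\alpha=0$ there is nothing new to prove: Broucke and Vindas \cite{BrouckeVindas} already constructed a $[0,1/2]$-system, which is an $[\alpha,1/2]$-system with $\alpha=0$.

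For $0<\alpha<1/2$ I would invoke Theorem~\ref{th: finitepresribtion} with $\RR=\{\alpha\}$ (a single, real, simple zero), $\SSS=\varnothing$, and any fixed $\delta$ with $0<\delta<\alpha$; the hypotheses hold trivially since $\Re\alpha=\alpha\in(0,1)$ and $\RR$ is symmetric and disjoint from $\SSS$. Part (1) of the theorem then gives $\psi_{\MP}(x)=x-x^{\alpha}/\alpha+O(x^{\delta})$, and because $\delta<\alpha$ the term $-x^{\alpha}/\alpha$ dominates, so $\psi_{\MP}(x)-x=O_{\eps}(x^{\alpha+\eps})$ for every $\eps>0$ while $\limsup_{x\to\infty}\frac{\log|\psi_{\MP}(x)-x|}{\log x}=\alpha$; hence the primes are exactly $\alpha$-well-behaved. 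Part (2), with underlying set $S=\varnothing$ so that the finite sum vanishes, gives $N_{\MP}(x)=ax+O\bigl\{x^{1/2}\exp(c(\log x)^{2/3})\bigr\}$, so $N_{\MP}(x)-ax=O_{\eps}(x^{1/2+\eps})$ for every $\eps>0$; thus the integers are $\beta$-well-behaved for some $\beta\le 1/2$. In particular $(\MP,\MN)$ is an $[\alpha,\beta]$-system with $\alpha,\beta\in[0,1)$.

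It then remains only to upgrade $\beta\le 1/2$ to $\beta=1/2$, and here I would appeal to Hilberdink's result \cite{Hilberdink2005}: every $[\alpha,\beta]$-system satisfies $\max\{\alpha,\beta\}\ge 1/2$. Since $\alpha<1/2$, this forces $\beta\ge 1/2$, hence $\beta=1/2$, so $(\MP,\MN)$ is the desired $[\alpha,1/2]$-system. No step here is genuinely difficult; the only points that need a moment's attention are the choice $\delta<\alpha$ (so that the unavoidable pole of $\zeta_{\MP}$ at $s=\delta$, whose presence stems from Lemma~\ref{lem:increase}, does not push the true prime exponent above $\alpha$), and the observation that for $\alpha<1/2$ the lower bound on $\beta$ is automatic, exactly in the spirit of the discussion following Theorem~\ref{th: discretization} — no explicit $\Omega$-estimate on $N_{\MP}(x)-ax$ is required.
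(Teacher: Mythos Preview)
Your proof is correct and follows essentially the same approach as the paper: for $0<\alpha<1/2$ you both apply Theorem~\ref{th: finitepresribtion} with $\RR=\{\alpha\}$, $\SSS=\varnothing$, and then invoke Hilberdink's inequality $\max\{\alpha,\beta\}\ge1/2$ to pin down $\beta=1/2$. The only cosmetic difference is the endpoint $\alpha=0$: you cite \cite{BrouckeVindas} directly, while the paper re-derives this case by noting that with $M=0$ and $\RR=\SSS=\varnothing$ the template $F(x)=\li(x)$ is already non-decreasing, so the construction becomes independent of $\delta$ --- but this is precisely the Broucke--Vindas example, so the two treatments coincide.
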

\begin{proof}
This follows from Theorem \ref{th: finitepresribtion} with $\RR=\{\alpha\}$, $\SSS=\varnothing$ if $\alpha>0$. For $\alpha=0$, we may take $M = 0$ and $\RR=\SSS=\varnothing$ since $\li(x)$ is non-decreasing. Then the example in (the proof of) Theorem \ref{th: finitepresribtion} is independent of $\delta$ which guarantees that the primes of this system are indeed $0$-well-behaved. 

The fact that $N_{\MP}(x)-ax\ll x^{1/2-\eps}$ cannot hold for any $\eps>0$ follows from Hilberdink's result $\max\{\alpha,\beta\}\ge1/2$.
\end{proof}

\begin{remark} \:
\begin{enumerate}
\item We already mentioned that in \cite[Ch.\ 17]{DiamondZhangbook} a $[1,\beta_0]$-system $\mathcal{P}$ was constructed for some $\beta_0 \leq 1/2$. One may generate from this example $[1,\beta]$-systems for arbitrary $\beta_0 < \beta < 1$ as follows. Let $\mathcal{P}_{\beta} = \mathcal{P} \cup \{p^{1/\beta}| p \in \mathbb{P}\}$. The primes of $\mathcal{P}_{\beta}$ remain not well-behaved as $\pi_{\mathcal{P}_{\beta}}(x) = \pi_{\MP}(x) + O(x^{\beta})$, while the extra primes introduce a pole for $\zeta_{\mathcal{P}_{\beta}}(s) = \zeta_{\MP}(s) \zeta(s/\beta)$ at $s = \beta$. This guarantees that the integers are not better than $\beta$-well-behaved as otherwise $\zeta_{\mathcal{P}_{\beta}}$ would have been analytic at $s = \beta$ due to a similar argument as in Corollary \ref{cor:alphagreaterbeta}. The $\beta$-well-behavedness itself follows from
 \begin{align*}
 N_{\MP_{\beta}}(x) & = \sum_{\substack{n^{1/\beta}\le x\\n\in\N}}N_{\MP}(xn^{-1/\beta}) \\
 & = \sum_{n\in\N} \frac{ax}{n^{1/\beta}} +  x \sum_{\substack{n^{1/\beta}> x\\n\in\N}}O(n^{-1/\beta}) + x^{\beta_0 + \eps}  \sum_{\substack{n^{1/\beta} \le x\\n\in\N}}O(n^{-(\beta_0 + \eps)/\beta})\\
 & = ax\zeta(1/\beta) + O(x^{\beta}).
 \end{align*}

\item If we were only interested in establishing Corollary \ref{cor:alphagreaterbeta} it is also an option to apply the discretization procedure directly to the function $G$ from \eqref{eq:Gdef} where the function $\li$ is replaced with the simpler function $\Li$ and where we may even take $M = 0$. The reason this does not generate problems is that the error introduced in going from $\li$ to $\Li$ is $O(x^{1/2})$ which is negligible in case $\alpha \geq 1/2$.
\end{enumerate}
\end{remark}

\section{The case $\alpha < \beta$}
\label{sec:alphalessbeta}

Assume that $\alpha, \beta \in [0,1)$ with $\alpha < \beta$ and $\beta > 1/2$. As in the previous section, we first construct a system in the extended sense by prescribing certain properties of its zeta function. Again, a zero with real part equal to $\alpha$ will be used to create the desired behavior of the primes. Unlike the previous case, it is not possible to introduce a pole with real part equal to $\beta$ on $\zeta(s)$ to create the desired behavior of the integers. Indeed, as $\beta>\alpha$, this pole will destroy the $\alpha$-well-behavedness of the primes as it will also generate a singularity for $\log \zeta(s)$. Instead, we construct a zeta function exhibiting extreme growth to the left of the line $\Re s = \beta$. For this, we take inspiration from the construction of \cite{BDV2020} (see also \cite{BDV2022}).

Let $(\tau_{l})_{l=1}^{\infty}$ be a sequence which rapidly increases to $\infty$, $(\delta_{l})_{l=1}^{\infty}$ a sequence tending to $0$, and $(\nu_{l})_{l=1}^{\infty}$ a sequence in the interval $[2, 3]$. We set $A_{l} = \tau_{l}^{1+\delta_{l}}$ and $B_{l} = \tau_{l}^{\nu_{l}}$. Later we will specify further how to choose the sequences $(\tau_{l})_{l}$, $(\delta_{l})_{l}$, and $(\nu_{l})_{l}$, but from the outset we assume that $(\tau_{l})_{l}$ increases sufficiently rapidly so that $B_{l} < A_{l+1}$. We consider the (absolutely continuous) function $\Pi_{C}$ defined as
\[
	\Pi_{C}(x) = \Li(x) - \Li(x^{\alpha}) + \sum_{l=1}^{\infty}R_{l}(x),
\]
where $R_{l}(x) = \int_{1}^{x}\dif R_{l}(u)$ with
\[
	\dif R_{l}(u) = \begin{cases}
		\tau_{l}\cos(\tau_{l}\log u)u^{\beta-2}\dif u 		&\text{if } A_{l}\le u < B_{l};\\
		0									&\text{else}.
	\end{cases}
\]
For technical convenience we assume that $\tau_{l} \log A_{l}, \tau_{l} \log B_{l}\in 2\pi\Z$, by changing the values of $\delta_{l}$ and $\nu_{l}$ by a $o(1)$ amount if necessary. A small calculation yields
\[
	R_{l}(x) = \frac{\tau_{l}}{(1-\beta)^{2}+\tau_{l}^{2}}\Bigl\{(1-\beta)\bigl(A_{l}^{\beta-1} - x^{\beta-1}\cos(\tau_{l}\log x)\bigr) + \tau_{l}x^{\beta-1}\sin(\tau_{l}\log x)\Bigr\}
\]
for $A_{l}\le x \le B_{l}$, and $R_{l}(x) = R_{l}(B_{l})$ for $x > B_{l}$. For $A_{l}\le x < B_{l}$ we have $R_{l}(x) \ll x^{\beta-1}$, and assuming the $\tau_{l}$ increase sufficiently rapidly, $\sum_{l}R_{l}(B_{l}) < \infty$. Hence we have
\begin{equation}
\label{eq: asymp Pi_{C}}
	\Pi_{C}(x) = \Li(x) - \Li(x^{\alpha}) + O(1).
\end{equation}
The function $\Pi_{C}(x)$ is non-decreasing because for $A_{l} < x < B_{l}$,
\[
	\Pi_{C}'(x) = \frac{1-x^{\alpha-1}}{\log x} + \tau_{l}\cos(\tau_{l}\log x)x^{\beta-2} \ge \frac{1-x^{\alpha-1}}{\log x} - x^{\beta-1} \ge 0,
\]
provided that $A_{1}$ is sufficiently large.

The associated zeta function $\zeta_{C}(s)$ is given by
\[
	\zeta_{C}(s) = \frac{s-\alpha}{s-1}\exp\biggl(\sum_{l=1}^{\infty}\eta_{l}(s)\biggr),
\]
where
\begin{align*}
	\eta_{l}(s) \coloneqq \int_{1}^{\infty}x^{-s}\dif R_{l}(x) & = \frac{\tau_l}{2}\int^{B_l}_{A_l} \bigl(x^{\beta - 2 -s + \I \tau_{l}} + x^{\beta - 2 -s - \I \tau_{l}}\bigr) \dif x  \\
	& = \frac{\tau_{l}}{2}\bigl(A_{l}^{\beta-1-s}-B_{l}^{\beta-1-s}\bigr)\biggl(\frac{1}{s+1-\beta-\I\tau_{l}}+\frac{1}{s+1-\beta+\I\tau_{l}}\biggr).
\end{align*}
Due to the rapid increase of the sequence $\tau_{l}$, we have for each $t\ge0$ that $\abs{t-\tau_{l}} \gg \tau_{l}$ except for at most one value of $l$. Hence the series $\sum_{l}\eta_{l}(s)$ converges locally uniformly in the half-plane $\Re s > \beta-1$.
Let $s=\sigma+\I t$, with $t\ge0$, and let $l_{0}$ be such that $l\neq l_{0} \implies \abs{t-\tau_{l}} \gg \tau_{l}$. If $\sigma\ge \beta$, then
\begin{equation}
\label{eq: bound eta for sigma ge beta}
	\sum_{l=1}^{\infty}\eta_{l}(s) \ll \sum_{l\neq l_{0}}A_{l}^{-1} + \tau_{l_{0}}A_{l_{0}}^{-1} \ll 1 + \tau_{l_{0}}^{-\delta_{l_{0}}} \ll 1.
\end{equation}
On the other hand, if $0<\sigma< \beta$, then as the sequence $A_l$ grows sufficiently fast we have
\[
	\sum_{l=1}^{\infty}\eta_{l}(\sigma+\I\tau_{l_{0}}) = \eta_{l_{0}}(\sigma+\I\tau_{l_{0}}) + O\biggl(\sum_{l\neq l_{0}} A_l^{-(1-\beta)}\biggr) = \eta_{l_{0}}(\sigma+\I\tau_{l_{0}}) + O(1),
\]
and
\begin{align*}
	 \eta_{l_{0}}(\sigma+\I\tau_{l_{0}}) 	&= \frac{\tau_{l_{0}}A_{l_{0}}^{\beta-1-\sigma}}{2(1+\sigma-\beta)} + O\bigl(\tau_{l_{0}}B_{l_{0}}^{\beta-1-\sigma} + 1\bigr) \\
	 							&= \frac{\tau_{l_{0}}^{(1+\delta_{l_{0}})(\beta-\sigma) - \delta_{l_{0}}} }{2(1+\sigma-\beta)}+ O\bigl(\tau_{l_{0}}^{2(\beta-\sigma)-1}+1\bigr),
\end{align*}
in view of $A_{l} = \tau_{l}^{1+\delta_{l}}$ and $B_{l} = \tau_{l}^{\nu_{l}}\ge \tau_{l}^{2}$. For any $\eps > 0$  , we may take $l_{0}$ sufficiently large such that $\delta_{l_{0}}\le \max\{\eps, 1-\beta\}$, whence for $0 < \sigma < \beta$,
\begin{equation}
\label{eq: extreme growth zeta_{C}}
	\zeta_{C}(\sigma+\I\tau_{l_{0}}) \gg \exp\biggl\{\frac{\tau_{l_{0}}^{\beta-\sigma-\eps}}{2(1+\sigma-\beta)}\bigl(1+o(1)\bigr)\biggr\}.
\end{equation}
In particular, $\zeta_{C}$ is not polynomially bounded in any half plane $\sigma \ge \sigma_{0}$ if $\sigma_{0}< \beta$.

Next we consider the function $\pi_{C}(x)$, which is defined through
\[
	\Pi_{C}(x) = \sum_{k=1}^{\infty}\frac{\pi_{C}(x^{1/k})}{k}, \quad \text{i.e.} \quad \pi_{C}(x) = \sum_{k=1}^{\infty}\frac{\mu(k)}{k}\Pi_{C}(x^{1/k}),
\]
$\mu(k)$ being the classical M\"obius function. Using \eqref{eq: lidef}, we get
\[
	\pi_{C}(x) = \li(x) - \li(x^{\alpha}) + \sum_{k=1}^{\infty}\sum_{l=1}^{\infty}r_{l,k}(x),
\]
where $r_{l,k}(x) = (\mu(k)/k)R_{l}(x^{1/k})$. For each fixed $x\ge1$, only finitely many terms in the sum are nonzero.

We now prove that $\pi_{C}(x)$ is non-decreasing by showing that its derivative is $\ge 0$ (almost everywhere). The argument is similar to the one given in \cite[Section 4]{BrouckeVindas}. We have
\[
	r_{l,k}'(x) = \begin{dcases}
		\frac{\mu(k)}{k^{2}}\tau_{l}\cos\Bigl(\frac{\tau_{l}}{k}\log x\Bigr)x^{-\frac{1-\beta}{k}-1}	&\text{if } A_{l}^{k} < x < B_{l}^{k}, \\
		0																&\text{if } x < A_{l}^{k} \text{ or } x > B_{l}^{k}.
	\end{dcases}
\]
For $x=A_{l}^{k}, B_{l}^{k}$, we leave the derivative undefined.
We set $I_{l,k} = (A_{l}^{k}, B_{l}^{k})$. Let now $x \ge 1$ be fixed. We observe that for every $k$ there is at most one value of $l$ such that $x\in I_{l,k}$. Also, if $k_{1}\le k_{2}$ and $l_{1}$, $l_{2}$ are such that $x\in I_{l_{1},k_{1}}\cap I_{l_{2},k_{2}}$, then $l_{1} \ge l_{2}$. Let now $m$ be the smallest integer such that $x\in I_{l,m}$ for some $l$, and let $L$ be the unique index such that $x\in I_{L,m}$. By the aforementioned observations, and using that $x\in I_{l,k}$ implies $x\ge \tau_{l}^{k}$, we get
\[
	\abs[3]{\biggl(\sum_{k=1}^{\infty}\sum_{l=1}^{\infty}r_{l,k}(x)\biggr)'} \leq \sum_{\substack{k,l\\ x\in I_{l,k}}}\abs[1]{r_{l,k}'(x)} \le \frac{1}{x}\sum_{k=m}^{\infty}\frac{\tau_{L}^{\beta}}{k^{2}} \le 2x^{\beta-1}.
\]

If $x < A_{1}$, then $\pi_{C}'(x) = \li'(x) - \bigl(\li(x^{\alpha})\bigr)' \ge 0$, because in view of \eqref{eq:liprimeTaylor} we find
\[
	 \bigl(\li(x^{\alpha})\bigr)' = \sum_{m=1}^{\infty}\frac{\alpha^{m}}{m!\zeta(m+1)}\frac{(\log x)^{m-1}}{x} \le \sum_{m=1}^{\infty}\frac{1}{m!\zeta(m+1)}\frac{(\log x)^{m-1}}{x} = \li'(x).
\]
Since
\[
	\frac{1}{\zeta(2)}\sum_{m=1}^{\infty}\frac{(\log x)^{m-1}}{m! x} \le \li'(x) \le \sum_{m=1}^{\infty}\frac{(\log x)^{m-1}}{m! x} = \frac{1-x^{-1}}{\log x},
\]
we have for $x > A_{1}$ that
\[
	\pi_{C}'(x) \ge \frac{1/\zeta(2) - x^{\alpha-1}}{\log x} - 2x^{\beta-1} > 0,
\]
provided that $A_{1}$ is sufficiently large. Hence $\pi_{C}(x)$ is non-decreasing. We clearly also have $\pi_{C}(x) \ll x/\log x$.

We now apply\footnote{If $\alpha \ge 1/2$, it is also possible to apply the theorem with $F(x) = \Pi_{C}(x)$, as $\Pi_{\MP}(x) = \pi_{\MP}(x) + O(x^{1/2}) = \Pi_{C}(x) + O(x^{1/2})$.} Theorem \ref{th: discretization} with $F(x) = \pi_{C}(x)$. This yields a sequence of generalized primes $\MP = (p_{j})_{j=1}^{\infty}$ satisfying $\pi_{\MP}(x) = \pi_{C}(x) + O(1)$ and
\[
	\abs[3]{\sum_{p_{j}\le x}\frac{1}{p_{j}^{\I t}} - \int_{1}^{x}u^{-\I t}\dif\pi_{C}(u)} \ll \sqrt{x} + \sqrt{\frac{x\log(\,\abs{t}+1)}{\log(x+1)}}.
\]
Let $\MN$ be the sequence of generalized integers generated by $\MP$. Then $(\MP,\MN)$ is an $[\alpha,\beta]$-system. Indeed, as before
\[
	\Pi_{\MP}(x) = \sum_{k\le\log x/\log p_{1}}\frac{\pi_{\MP}(x^{1/k})}{k} = \sum_{k\le\log x/\log p_{1}}\frac{\pi_{C}(x^{1/k})}{k} + O(\log\log x)
	= \Pi_{C}(x) + O(\log\log x).
\]
Hence, in view of \eqref{eq: asymp Pi_{C}}, the Chebyshev prime-counting function $\psi_{\MP}(x)$ satisfies
\[ \psi_{\MP}(x) = \int^{x}_{1} \log u \Big(\frac{1-u^{-1}}{\log u} \dif u - \frac{u^{\alpha - 1}-u^{-1}}{\log u} \dif u + \dif O(1) \Big) = x - \frac{x^{\alpha}}{\alpha} + O(\log x),
\]
and thus the primes are $\alpha$-well-behaved. 
To deduce the behavior of $N_{\MP}(x)$, we pass through the zeta function.

We have
\[
	\zeta_{\MP}(s) = \zeta_{C}(s)\e^{Z(s)},
\]
where
\[
	Z(s) \coloneqq \int_{1}^{\infty}x^{-s}\dif \,\bigl(\Pi_{\MP}(x)-\Pi_{C}(x)\bigr) \\
\]
As in the proof of Theorem \ref{th: finitepresribtion}, we obtain the bound \eqref{eq: zetaestimate} for $Z(s)$. In fact we will only need $Z(\sigma+\I t)\ll_{\si_{0}} \sqrt{\log(\,\abs{t}+2)}$ on every fixed closed half-plane $\sigma \ge \sigma_{0}$ with $\sigma_{0}> 1/2$.

To establish the asymptotic behavior of $N_{\MP}(x)$, we perform a Perron inversion. The analysis is very similar to the Perron inversion in the proof of Theorem \ref{th: finitepresribtion}.

Since $N_{\MP}(x)$ is non-decreasing, we have
\begin{align} \label{eq: secondPerron}
	N_{\MP}(x) 	&\le \int_{x}^{x+1}N_{\MP}(u)\dif u = \frac{1}{2\pi\I}\int_{2-\I\infty}^{2+\I\infty}\frac{(x+1)^{s+1}-x^{s+1}}{s(s+1)}\zeta_{\MP}(s)\dif s, \\
	N_{\MP}(x)	&\ge \mathrlap{\int_{x-1}^{x}N_{\MP}(u)\dif u}\phantom{\int_{x}^{x+1}N_{\MP}(u)\dif u}
						 = \frac{1}{2\pi\I}\int_{2-\I\infty}^{2+\I\infty}\frac{x^{s+1}-(x-1)^{s+1}}{s(s+1)}\zeta_{\MP}(s)\dif s,
\end{align}
and where the contour integrals converge absolutely. We shift the contour of integration to the line $\si=\beta$, picking up the residue $a(x+O(1))$ (with $a\coloneqq\res_{s=1}\zeta_{\MP}(s)$) from the pole at $s=1$. Let $C>0$ be such that $\abs[0]{Z(s)} \le C\sqrt{\log(\,\abs{t}+2)}$ for $\sigma\ge\beta$. Then $\zeta_{\MP}(\sigma+\I t) \ll \exp\bigl(C\sqrt{\log(\,\abs{t}+2)}\bigr)$ for $\si\ge\beta$. Considering the ranges $\abs{t}\le x$ and $\abs{t}>x$ separately as before, we arrive at
\[
	N_{\MP}(x) \le ax + O\bigl\{x^{\beta}\exp\bigl(C\sqrt{\log x}\bigr)\log x\bigr\} \quad \text{and} \quad N_{\MP}(x) \ge ax + O\bigl\{x^{\beta}\exp\bigl(C\sqrt{\log x}\bigr)\log x\bigr\}.
\]
In particular $N_{\MP}(x) = ax+O_{\eps}(x^{\beta+\eps})$ for every $\eps>0$.

If now $N_{\MP}(x) = ax + O(x^{\beta-\eps})$ for some $\eps>0$, then $\zeta_{\MP}(s)=as/(s-1)+s\int_1^{\infty}x^{-(s+1)}(N_{\MP}(x)-ax)\dif x$ in the half-plane $\sigma \ge \sigma_{0}\coloneqq \beta-\eps/2$ say, and so $\zeta_{\MP}(s) -a/(s-1)$ is polynomially bounded there. 
In view of $\zeta_{\MP}(s) = \zeta_{C}(s)\e^{Z(s)}$ and $Z(s) \ll_{\sigma_{0}} \sqrt{\log(\,\abs{t}+2)}$, this contradicts \eqref{eq: extreme growth zeta_{C}}. This finishes the proof that $(\MP,\MN)$ is an $[\alpha, \beta]$-system.

\begin{remark} The example that we presented here is inspired on that of \cite{BDV2022} where a $[0,1]$-system $\mathcal{P}$ was constructed. Our construction above does not fully go through to get $[\alpha,1]$-systems for arbitrary $0 < \alpha < 1$, but to get those one can argue as follows. We first remark that any Beurling system $(\mathcal{P}, \mathcal{N})$ for which $\zeta_{\MP}(s) - a/(s-1)$ admits polynomial bounds in a half-plane $\sigma \geq \sigma_{0}$, $\sigma_0 < 1$, must have well-behaved integers. Indeed, if $\zeta(\sigma+\I t)\ll |t|^{M}$, say, in the half-plane $\sigma\ge \sigma_0$ away from the pole at $s = 1$, then we may consider the non-decreasing functions $N_{m}(x) := \int^{x}_{1}  u^{-1}N_{m-1}(u) \dif u$ and $N_{0}(x) := N_{\MP}(x)$, for which $\mathcal{M}\{\dif N_m; s\} =  \zeta_{\MP}(s)/s^m$, so that for $m>M$ the exact same Perron inversion argument as performed in the examples above would provide the asymptotic formula $N_{m}(x) = ax + O_{\eps}(x^{\sigma_0 + \eps})$ for some $a>0$ and any $\eps > 0$. Then, applying successively an elementary Tauberian argument yields $N_{m-1}(x)=ax+O_{\eps}(x^{1-\frac{1-\sigma_0}{2} + \eps}),\ldots, N(x)= ax+O_{\eps}(x^{1-\frac{1-\sigma_0}{2^m} + \eps})$ and the integers $\mathcal{N}$ are well-behaved.

From the $[0,1]$-system of \cite{BDV2022} one can create a new system $(\mathcal{P}_{\alpha}, \mathcal{N}_{\alpha})$ as $\mathcal{P}_{\alpha} = \mathcal{P} \cup \{p^{1/\alpha}| p \in\mathbb{P} \}$. The primes of this new system are $\alpha$-well-behaved in view of $\pi_{\mathcal{P}_{\alpha}}(x) =\pi_{\MP}(x) + x^{\alpha}/(\alpha \log x) + O(x^\alpha/ \log^2 x)$. The zeta function $\zeta_{\mathcal{P}_{\alpha}}(s) = \zeta_{\MP}(s)\zeta(s/\alpha)$ remains not polynomially bounded (away from the pole at $s = 1$) on any half-plane $\Re s \geq \sigma_{0}$, $\sigma_0 < 1$, inheriting this property from the zeta function of $\mathcal{P}$, as $1/\zeta(s/\alpha)$ is bounded on every half-plane $\Re s \geq 1 - \eps$ with $\eps<1-\alpha$. Therefore the integers $\mathcal{N}_{\alpha}$ cannot be well-behaved as that would entail polynomial bounds for $\zeta_{\MP_{\alpha}}(s) - a/(s-1)$ in a half-plane $\Re s \geq 1-\eps$ via the same argument preceding this remark.
\end{remark}

\section{Systems with $\beta < 1/2$} \label{sec:betalesshalf}

The construction of Beurling prime number systems having \emph{very well-behaved integers} in the sense that $\beta<1/2$ presents a number of new challenges. The most glaring one is the failure of the strategy of constructing first \emph{continuous} Beurling systems with the required properties and then generating a discrete example through one of the currently available random approximation procedures. Namely, a direct application of Theorem \ref{th: discretization} will always yield an error term of at least $O(x^{1/2})$ on the integers. So, it appears that considering continuous Beurling systems is of little help here, and one has to construct a discrete example right from the start.

As the integers have to display the best behavior, it seems natural to define a Beurling system through the sequence of the integers rather than the primes. Yet, it appears that for sequences defined through this philosophy, it is often extremely difficult to show which behavior the primes must admit. For example, for the classical primes (or for the number of prime ideals in the ring of integers of an algebraic number field), we cannot yet prove $\alpha < 1$.

As such, throughout this section we will assume the \emph{Riemann hypothesis}, asserting that $(\mathbb{P},\mathbb{N})$, the classical primes and integers, are a $[1/2, 0]$-system. On this template system, we shall consider a few perturbations providing $[\alpha,\beta]$-systems for other values of $\alpha,\beta$. As mentioned in Theorem \ref{wbs: thrh} in the introduction, we shall show the existence of $[1/2, \beta]$-systems for each $0 \le \beta<1/2$, and of $[\alpha, \beta]$-systems for each $\alpha$, $\beta$ with $1/2 < \alpha < 2/3$ and $2\alpha/(\alpha + 2) \leq \beta < 1/2$.

We first consider the simplest case when $\alpha = 1/2$. We shall add a few primes to the classical primes in order to generate an extra term in the integer counting function. Specifically, for $0 < \beta < 1/2$, we consider the Beurling prime number system $\mathcal{P}_{\beta} = \mathbb{P} \cup \mathbb{P}^{1/\beta} = \{p,p^{1/\beta}\mid p \text{ prime}\}$. We immediately obtain (assuming RH) that\footnote{To simplify the notation, we write in this section $\pi_{\beta}$ instead of $\pi_{\MP_{\beta}}$ etc.\ for the counting functions associated with $\MP_{\beta}$.} $\pi_{\beta}(x) = \pi(x) + \pi(x^{\beta}) = \Li(x) + O_{\eps}(x^{1/2 + \eps})$ for each $\eps > 0$. The error term does not hold for any $\eps < 0$ as can be seen from the presence of $\zeta$-zeros on the line  $\Re s = 1/2$ or from \cite[Th.\ 1]{Hilberdink2005} after deducing the behavior of the integers. Alternatively one may also apply Littlewood's \cite{Littlewood1914} well-known oscillation result $\pi(x) =\Li(x) + \Omega(\sqrt{x} \log\log\log x /\log x)$. It remains to analyze the behavior of the integers. The integers of this system are formed by the products $nl^{1/\beta}$ where $n,l$ represent classical integers. We shall apply Dirichlet's hyperbola method to get an asymptotic formula for $N_{\beta}(x)$. For later use we show something a little bit more general in the following lemma.

\begin{lemma} \label{wbs: lemdirichlet} Let $\mathcal{N},\mathcal{L}$ be subsets\footnote{These are allowed to be multisets.} of $[1,\infty)$, $h: \mathcal{L} \rightarrow \mathbb{R}^{+}$, $a,b \geq 0$, $0 \leq \gamma, \delta < \beta < 1$. If
\begin{align}
 N(x) \coloneqq \sum_{n \leq x, n \in \mathcal{N}} 1 & = ax + O(x^{\gamma}), \\
 L(x) \coloneqq \sum_{l \leq x, l \in \mathcal{L}} h(l) &= bx^{\beta} + O(x^{\delta}),
\end{align}
then
\[
	\sum_{\substack{nl \leq x \\n \in \mathcal{N}, l\in \mathcal{L}}} h(l) = aH(1)x + bI(\beta)x^{\beta} + O\left(x^{\frac{\beta - \gamma \delta}{1-\gamma + \beta - \delta}}\right),
\]
where $H(1) = \sum_{l \in \mathcal{L}} h(l)/l$ and $I(\beta) = \lim_{R \rightarrow \infty} \bigg( \sum_{n \leq R, n \in \mathcal{N}}n^{-\beta} -aR^{1-\beta}/(1-\beta) \bigg)$, that is the value at $\beta$ of the meromorphic extension of $\sum_{n \in \mathcal{N}} n^{-s}$.
\end{lemma}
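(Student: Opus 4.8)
The plan is to estimate $S(x):=\sum_{nl\le x,\,n\in\mathcal{N},\,l\in\mathcal{L}}h(l)$ by the Dirichlet hyperbola method, carrying a free parameter $y$ with $1<y<x$ that is optimized at the very end. Splitting the pairs $(n,l)$ with $nl\le x$ according to whether $l\le y$ (so $n\le x/l$) or $l>y$ (so that $n\le x/l<x/y$) yields the exact identity
\[
	S(x)=\sum_{\substack{l\le y\\ l\in\mathcal{L}}}h(l)\,N(x/l)\;+\;\sum_{\substack{n\le x/y\\ n\in\mathcal{N}}}\bigl(L(x/n)-L(y)\bigr);
\]
here no boundary correction is needed, since a pair with $n=x/y$ contributes $L(x/n)-L(y)=L(y)-L(y)=0$. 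Everything then reduces to controlling the two pieces with the hypotheses and a handful of Riemann--Stieltjes partial summations applied to the nondecreasing step functions $N$ and $L$ (that $\mathcal{N},\mathcal{L}$ may be multisets in $[1,\infty)$ rather than integers is immaterial).

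Before assembling, I would record three elementary estimates, each a one-line partial summation that uses $0\le\gamma,\delta<\beta<1$. First, $H(1)=\sum_{l\in\mathcal{L}}h(l)/l$ converges and $\sum_{l\le y}h(l)/l=H(1)-\tfrac{b\beta}{1-\beta}y^{\beta-1}+O(y^{\delta-1})$, while $\sum_{l\le y}h(l)/l^{\gamma}\ll y^{\beta-\gamma}$. Second, the limit $I(\beta)=\lim_{R\to\infty}\bigl(\sum_{n\le R}n^{-\beta}-\tfrac{a}{1-\beta}R^{1-\beta}\bigr)$ exists, coincides with the value at $\beta$ of the analytic continuation of $\sum_{n\in\mathcal{N}}n^{-s}$ past $\Re s=1$ (which exists on $\Re s>\gamma$ because $N(x)-ax\ll x^{\gamma}$), and more precisely $\sum_{n\le R}n^{-\beta}=\tfrac{a}{1-\beta}R^{1-\beta}+I(\beta)+O(R^{\gamma-\beta})$. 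Third, $\sum_{n\le R}n^{-\delta}\ll R^{1-\delta}$.

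Now I would substitute $N(x/l)=ax/l+O((x/l)^{\gamma})$ into the first piece and $L(x/n)=b(x/n)^{\beta}+O((x/n)^{\delta})$, $L(y)=by^{\beta}+O(y^{\delta})$, $N(x/y)=ax/y+O((x/y)^{\gamma})$ into the second. Using the estimates above, the first piece equals $aH(1)x-\tfrac{ab\beta}{1-\beta}xy^{\beta-1}+O\bigl(x^{\gamma}y^{\beta-\gamma}+xy^{\delta-1}\bigr)$, and the second piece equals $bI(\beta)x^{\beta}+\bigl(\tfrac{ab}{1-\beta}-ab\bigr)xy^{\beta-1}+O\bigl(x^{\gamma}y^{\beta-\gamma}+xy^{\delta-1}\bigr)$, the $xy^{\beta-1}$ contributions in the latter coming from the leading term of $bx^{\beta}\sum_{n\le x/y}n^{-\beta}$ and from $-L(y)N(x/y)$. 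The decisive observation is that the three explicit secondary main terms of size $xy^{\beta-1}$ cancel: $-\tfrac{ab\beta}{1-\beta}+\tfrac{ab}{1-\beta}-ab=0$. Hence $S(x)=aH(1)x+bI(\beta)x^{\beta}+O\bigl(x^{\gamma}y^{\beta-\gamma}+xy^{\delta-1}\bigr)$, and choosing $y=x^{(1-\gamma)/(1-\gamma+\beta-\delta)}$ — which lies strictly between $1$ and $x$ since $0<1-\gamma<1-\gamma+\beta-\delta$ — makes the two error terms equal, with common value $x^{(\beta-\gamma\delta)/(1-\gamma+\beta-\delta)}$ (one checks $1+\tfrac{(1-\gamma)(\delta-1)}{1-\gamma+\beta-\delta}=\tfrac{\beta-\gamma\delta}{1-\gamma+\beta-\delta}$). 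This is the asserted remainder.

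The main obstacle is precisely this cancellation of the $xy^{\beta-1}$ terms. If one were to absorb the secondary main term $-\tfrac{ab\beta}{1-\beta}xy^{\beta-1}$ of $\sum_{l\le y}h(l)/l$, and the corresponding term of $L(y)N(x/y)$, into the error, one would be left with $O\bigl(xy^{\beta-1}+x^{\gamma}y^{\beta-\gamma}\bigr)$, which optimizes only to $x^{\beta}$ — no better than the main term $bI(\beta)x^{\beta}$ — rendering the lemma vacuous. So the constants $\tfrac{b\beta}{1-\beta}$, $\tfrac{a}{1-\beta}$, $ab$ must be tracked explicitly through the partial summations; everything else is routine bookkeeping. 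A secondary technical point, also relying on $\beta<1$ (and on $\gamma<\beta$ for the error in the second estimate), is the finiteness of $H(1)$ and the existence of the limit defining $I(\beta)$, noted above.
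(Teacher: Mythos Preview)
Your argument is correct and is essentially the paper's own proof: both apply Dirichlet's hyperbola method with a free parameter, track the secondary main terms of size $xy^{\beta-1}$ (equivalently $x^{\beta}y^{1-\beta}$ in the paper's parametrization) to verify they cancel, and optimize the parameter to obtain the stated error. The only cosmetic difference is that the paper truncates on the $n$-variable (writing the three-term hyperbola identity $\sum_{n\le y}L(x/n)+\sum_{l\le x/y}h(l)N(x/l)-N(y)L(x/y)$ and choosing $y=x^{(\beta-\delta)/(1-\gamma+\beta-\delta)}$), whereas you truncate on the $l$-variable and fold the cross term into the second sum; your $y$ is simply $x$ divided by the paper's $y$.
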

\begin{proof} In order to avoid an overload of notation we shall omit the subscripts $n \in \mathcal{N}$ and $l \in \mathcal{L}$ in the sums throughout the proof. Let $1 \leq y \leq x$. Applying Dirichlet's hyperbola method gives
\[
	  \sum_{nl \leq x} h(l) = \sum_{n \leq y} L(x/n) + \sum_{l \leq x/y} h(l) N(x/l) - N(y) L(x/y).
\]
The first term delivers
\[
	\sum_{n \leq y} L(x/n) = bx^{\beta} \sum_{n \leq y} n^{-\beta} + x^{\delta} \sum_{n \leq y}O(n^{-\delta}) = bx^{\beta}I(\beta) + \frac{abx^{\beta}y^{1-\beta}}{1-\beta} + O(x^{\beta}y^{\gamma - \beta}) + O(x^{\delta} y^{1-\delta}).
\]
The second term gives
\[
	ax\sum_{l \leq x/y} h(l)/l + x^{\gamma}\sum_{l \leq x/y} h(l)O(l^{-\gamma}) = axH(1) - \frac{\beta ab x^{\beta}y^{1-\beta }}{1-\beta} + O(x^{\delta}y^{1 - \delta} ) + O(x^{\beta} y ^{\gamma - \beta}).
\]
The third term, finally, is
\[
	(ay + O(y^{\gamma}))(bx^{\beta}y^{-\beta} + O(x^{\delta}y^{-\delta})) = abx^{\beta}y^{1-\beta} + O(x^{\beta}y^{\gamma - \beta}) + O(x^{\delta}y^{1-\delta}).
\]
The result follows after optimizing $y = x^{\frac{\beta - \delta}{1 - \gamma + \beta - \delta}}$.
\end{proof}
Applying this lemma with $\mathcal{N} = \mathbb{N}$, $\mathcal{L} = \mathbb{N}^{1/\beta}$, $a=b =1$, $h \equiv 1$ and $\gamma = \delta = 0$, delivers
\[
	N_{\beta}(x) = \zeta(1/\beta) x + \zeta(\beta) x^{\beta} + O(x^{\frac{\beta}{\beta + 1}}),
\]
so the generalized integers are indeed $\beta$-well-behaved and not better (as $\zeta$ does not vanish on the positive real line). This concludes the analysis for systems with $\alpha = 1/2$.

\medskip
In order to find systems with $\alpha > 1/2$, we must introduce a deviation on the primes without completely destroying the behavior of the integers. This cannot be achieved by simply adding new primes to the system. Namely, considering a system of the form $\mathcal{P} = \mathbb{P} \cup \mathcal{A}$ results in a perturbation of $\log \zeta(s)$, that is $\log \zeta_{\mathcal{P}}(s) = \log \zeta(s) + \log\zeta_{\mathcal{A}}(s)$, where $\log\zeta_{\mathcal{A}}(s)$ must, in view of Landau's theorem \cite[Th.\ 1.7]{MontgomeryVaughanbook}, admit a singularity at $s =\alpha$, in order for $\mathcal{P}$ to be an $\alpha$-system as $\Pi_{\mathcal{A}}(x)$ must then be both $O(x^{\alpha + \eps})$ and $\Omega(x^{\alpha - \eps})$. On the other hand, if the integers $\mathcal{N}$ would be $\beta$-well-behaved with $\beta < \alpha$, then $\zeta_{\mathcal{P}}(s) = \zeta(s) \zeta_{\mathcal{A}}(s)$ and thus also $\zeta_{\mathcal{A}}(s)$ would be analytic at $s =\alpha$. Now consider the limit as $\sigma \rightarrow \alpha +$ of the positive function $\log \zeta_{\mathcal{A}}(\sigma)$. This limit clearly exists (and may be $\infty$) as $\log \zeta_{\mathcal{A}}(\sigma)$ is decreasing. If it were $\infty$, then also $\zeta_{\mathcal{A}} (\sigma) = \exp(\log \zeta_{\mathcal{A}}(\sigma))$ must tend to $\infty$ as $\sigma \rightarrow \alpha+$, contradicting that $\zeta_{\mathcal{A}}(s)$ is analytic at $s = \alpha$. On the other hand, if the limit were a finite nonnegative number, this would imply that $\zeta_{\mathcal{A}}(\alpha) \geq 1$. Now, if $\zeta_{\mathcal{A}}(s)$ is analytic at $s =\alpha$ and as $\zeta_{\mathcal{A}}(\alpha) \neq 0$, it follows that the branch of the logarithm of $\zeta_{\mathcal{A}}(s)$ extending $\log \zeta_{\mathcal{A}}(s)$ on the half-plane $\Re s > \alpha$ is also analytic at $s = \alpha$, contradicting the fact that $\log \zeta_{\mathcal{A}}(s)$ would have a singularity there. In conclusion, one cannot form $[\alpha,\beta]$-systems with $\alpha > 1/2$ and $\beta < 1/2$ by simply adding extra generalized primes to the classical primes $\mathbb{P}$ (if RH is true).

The above obstruction does not occur when the primes are subtracted instead as the singularity of $-\log \zeta_{\mathcal{A}}(s)$ at $s = \alpha$ may turn into a zero after exponentiation. However, we cannot subtract an arbitrary sequence; we can only delete the primes that are already there! Our strategy will therefore be to first find a subsequence of the primes that we can delete whose counting function is asymptotically about $\Li(x^\alpha)$ and whose perturbation to $\zeta(s)$ has good analytic behavior. For this we shall once again use Theorem \ref{th: discretization}.

We consider the function
\begin{equation}
\label{def F}
	F(x) = \int^{x}_{1} u^{\alpha-1} \dif \:(\pi(u) + E(u))  = \sum_{\substack{p\le x\\p\in\mathbb{P}}} p^{\alpha-1} + \int^{x}_{1} u^{\alpha-1}\dif E(u),
\end{equation}
where $\dif E$ is an error measure supported on $\mathbb{P}$ that we shall define in a short while.
We briefly recall the construction of the sequence $p_j$ from the proof of Theorem \ref{th: discretization} for purely discrete measures $\dif F(x)$. One considers independent random variables $P_j$ whose (cumulative) distribution function is given by
\[
P(P_j \leq x) = \begin{cases}
 	0 & \text{if } F(x) \leq j-1, \\
 	F(x) - j+1& \text{if } j-1 < F(x) < j, \\
 	1 & \text{if } F(x) \geq j.
\end{cases}
\]
In the proof one then shows that the desired property \eqref{eq: bound exp sums} holds almost surely (upon replacing $p_j$ with $P_j$). In particular, there must therefore exist a sequence $p_j$ for which the requirements of Theorem \ref{th: discretization} are met, as by definition of the random variables $P_{j}$, the counting function of the sequence $p_j$ is at most $1$ apart from $F$. As the measure $\dif F$ is supported on $\mathbb{P}$, so are the probability measures associated with the random variables $P_{j}$, so that the elements $p_j$ must in our case all be classical primes.

The purpose of the error measure $\dif E$ is to ensure that the primes selected by Theorem \ref{th: discretization} are all \emph{different} as we cannot delete a prime twice from our system. One may continue without this error, but then the construction and analysis of the Beurling system becomes more complicated, see Remark \ref{rem: prob}. Let $q_j$ be the first classical prime for which $ \int^{q_{j}}_{1} u^{\alpha-1} \dif \pi(u) \geq j$. We define $\dif E$ as the measure transferring the \emph{excess probability} of $q_j$ to the next classical prime, that is $\dif E = \sum_{j= 1}^{\infty} \dif E_j$ where $\dif E_j\{p\} = 0 $ for each $p \in \mathbb{P}$ except at $\dif E_j\{q_j\} = j - \int^{q_{j}}_{1} u^{\alpha-1} \dif \pi(u) $ and $\dif E_j\{q_{j}^{+}\}  = -j + \int^{q_{j}}_{1} u^{\alpha-1} \dif \pi(u)$. Here $q_{j}^{+}$ denotes the next classical prime after $q_j$. 
We have, denoting the classical prime preceding $q_{j}$ by $q_{j}^{-}$,
\[
	\int_{1}^{x}|\dif E(u)| \le \sum_{q_{j}\le x}2\biggl(\int_{1}^{q_{j}}u^{\alpha-1}\dif\pi(u)-j\biggr) \le \sum_{q_{j}\le x} 2\biggl(\sum_{p\le q_{j}}p^{\alpha-1}-\sum_{p\le q_{j}^{-}}p^{\alpha-1}\biggr)
	 = 2\sum_{q_{j}\le x}q_{j}^{\alpha-1},
\]	
as $\sum_{p\le q_{j}^{-}}p^{\alpha-1} < j$ by definition of $q_{j}$. Therefore, putting $Q(x) \coloneqq \sum_{q_j \le x} 1$ for the counting function of the $q_j$, we obtain
\begin{equation}
\label{bound dif E}
	\int_{1}^{x}|\dif E(u)| \le \sum_{q_{j}\le x}2q_{j}^{\alpha-1} = 2\int_{1}^{x} u^{\alpha-1} \dif Q(u) 
	\ll \frac{x^{2\alpha-1}}{\log x},
\end{equation}
where in the last estimate we used partial summation and the fact that $Q(x) =  \int_{1}^{x}u^{\alpha-1}\dif\pi(u)+O(1) \sim x^{\alpha}/(\alpha\log x)$ by the definition of the sequence $q_j$ and the prime number theorem.

Let us denote $\mathcal{P}_\mathcal{S} = \{p_j: j\ge1\}$ for the Beurling system generated by the probabilistic procedure from Theorem \ref{th: discretization} applied to $F$. Subtracting the elements of $\mathcal{P}_\mathcal{S}$ from $\mathbb{P}$, the characteristic function of the remaining integers $\MN =  \{n \in \mathbb{N}: p | n \Rightarrow p \notin \mathcal{P}_{\mathcal{S}}\}$ can be written as the convolution $1_{\MN} = 1_{\mathbb{N}}\ast\mu_{\mathcal{S}}$, where $1_{\mathcal{A}}$ denotes the characteristic function of a set $\mathcal{A}$, and $\mu_{\mathcal{S}}$ is the M\"obius function of $\mathcal{P}_\mathcal{S}$, that is, the multiplicative function defined by $\mu_{\mathcal{S}}(p_{j}) = -1$, $\mu_{\mathcal{S}}(p_{j}^{\nu}) = 0$ for $\nu\ge2$, and $\mu_{\mathcal{S}}(p^{\nu}) = 0$, for $p\notin \MP_{\mathcal{S}}$, $\nu\ge1$.
Therefore, we wish to find a strong estimate on $M_{\mathcal{S}}(x) \coloneqq \sum_{ n \leq x} \mu_{\mathcal{S}}(n)$.

In view of \eqref{def F} and \eqref{bound dif E}, the prime number theorem yields
\[
	F(x) = \Li(x^{\alpha})\bigl(1+o(1)\bigr) + O\biggl(\frac{x^{3\alpha-2}}{\log x}\biggr) \sim \Li(x^{\alpha}).
\]
As $F(x)$ is now about $x^{\alpha}/ \log x$, the estimate \eqref{eq: bound exp sums} actually holds with $x^{\alpha/2}$ replacing $\sqrt{x}$, that is,
\begin{equation} \label{eq: adjusted}
 \sum_{p_{j}\le x}p_{j}^{-\I t} - \int_{1}^{x}u^{-\I t}\dif F(u) \ll x^{\alpha/2} + x^{\alpha/2}\sqrt{\frac{\log(\,\abs{t}+1)}{\log(x+1)}}.
\end{equation}

One may for instance apply Theorem \ref{th: discretization} to $\tilde{F}(x) \coloneqq F(x^{1/\alpha})$ instead, see also \cite[Remark 2.3]{BrouckeVindas}.
Via integration by parts, the fact that $\Pi_{\mathcal{S}}(x) = \pi_{\mathcal{S}}(x) + O(x^{\alpha/2})$, and the adjusted estimate \eqref{eq: adjusted}, one obtains that on each half-plane $\Re s \geq \alpha/2 + \eps$,
\begin{align*}
	\log \zeta_{\mathcal{S}}(s)
		& = \int^{\infty}_{1} u^{-s} \dif \Pi_{\mathcal{S}}(u) = \int^{\infty}_{1} u^{-s} \dif \pi_{\mathcal{S}}(u) + O_{\eps}(1) \\
		& = \int^{\infty}_{1} u^{-\sigma}\dif\,\biggl(\sum_{p_j \leq u} p_j^{-\I t}\biggr) + O_{\eps}(1) = \int^{\infty}_{1} u^{-s} \dif F(u) + O_{\eps}(\sqrt{\log|t|}) \\
		&= \int^{\infty}_{1} u^{-s+\alpha - 1}\dif\Pi(u) + O\left(\int^{\infty}_1 u^{-\sigma}\abs{\dif E(u)}\right) + O_{\eps}(\sqrt{\log|t|}) \\
		& = \log \zeta(s + 1-\alpha) + O_{\eps}(\sqrt{\log|t|}).
\end{align*}
Note that we use here that $\alpha < 2/3$ to ensure that the integral involving $\abs{\dif E}$, which converges on $\Re s > 2\alpha - 1$ in view of \eqref{bound dif E}, also converges on $\Re s \geq \alpha/2 + \eps$. This implies that both $\zeta_{\mathcal{S}}(s)$ and $1/\zeta_{\mathcal{S}}(s)$ are $\ll_\eps |t|^{\eps}$ on each half-plane $\Re s \geq \alpha/2 + \eps$. (RH implies that both $\zeta(s)$ and $1/\zeta(s)$ are $\ll_\eps |t|^{\eps}$ on half-planes $\Re s \geq 1/2 + \eps$, see for instance \cite[Th.\ 13.18 and Th.\ 13.23]{MontgomeryVaughanbook}.) A small adaptation of the Perron inversion argument of Section \ref{sec:alphalessbeta} from \eqref{eq: secondPerron} onwards where the location of the pole is now moved to $\alpha$ and the bounds on the half-plane $\Re s > \alpha /2 + \eps$ for $|\zeta(s)|$ are now $O_{\eps}(|t|^{\eps})$ delivers that $N_{\mathcal{S}}(x) = ax^{\alpha} + O_\eps(x^{\alpha/2 + \eps})$, where $a =  \res_{s=\alpha}\zeta_{\mathcal{S}}(s)/\alpha$. The exact same inversion argument applied to the non-decreasing function $N_{\mathcal{S}}(x) + M_{\mathcal{S}}(x)$ whose Mellin--Stieltjes transform is $\zeta_{\mathcal{S}}(s) + 1/\zeta_{\mathcal{S}}(s)$ then gives $N_{\mathcal{S}}(x) + M_{\mathcal{S}}(x) = ax^{\alpha} + O_\eps(x^{\alpha/2 + \eps})$. Therefore, we have for each $\eps > 0$,
\[
	M_{\mathcal{S}}(x) = \sum_{ n \leq x} \mu_{\mathcal{S}}(n) \ll_\eps x^{\alpha/2 + \eps}.
\]

We are now ready to define our Beurling system. Our system $\mathcal{P}_{\alpha,\beta}$ is given by $\mathcal{P}_{\alpha,\beta} =  \mathbb{P} \cup \mathbb{P}^{1/\beta} \setminus \mathcal{P}_\mathcal{S}$, with $\pi_{\alpha, \beta}(x) = \pi(x) + \pi(x^{\beta}) -\pi_{\mathcal{S}}(x)$. Recalling that $\beta < 1/2 < \alpha$, RH and the asymptotic $\pi_{\mathcal{S}}(x) = F(x)+O(1) \sim \Li(x^{\alpha})$ immediately imply that $\mathcal{P}_{\alpha,\beta}$ is an $\alpha$-system for the primes. It remains to analyze the integers. One has $\dif N_{\alpha,\beta}(u) = \dif \,\lfloor u\rfloor \ast \dif\, \lfloor u^{\beta}\rfloor \ast \dif M_{\mathcal{S}}(u)$. We begin by analyzing the convolution of the classical integers with the M\"obius function from the Beurling system $(\mathcal{P}_\mathcal{S},\mathcal{N}_\mathcal{S})$. Applying Lemma \ref{wbs: lemdirichlet} with the nonnegative functions
$h(l) = \mu_{\mathcal{S}}(l) + 1_{\mathcal{S}}(l)$ and $h(l) = 1_{\mathcal{S}}(l)$ gives, after subtracting them from one another,

\[
	\sum_{\substack{nl \leq x \\ n \in \mathbb{N}, l \in \mathcal{N}_\mathcal{S}}} \mu_{\mathcal{S}}(l) = \frac{x}{\zeta_{\mathcal{S}}(1)} + O_{\eps}\left(x^{\frac{2\alpha}{\alpha + 2} + \eps}\right).
\]
We now apply Lemma \ref{wbs: lemdirichlet} once more with $\mathcal{N} = \{n \in \mathbb{N}: p | n \Rightarrow p \notin \mathcal{P}_{\mathcal{S}}\}$, $\mathcal{L} = \mathbb{N}^{1/\beta}$ and $h = 1_{\mathcal{L}}$ to find the estimate on the last convolution;
\[
	N_{\alpha,\beta}(x) = \sum_{\substack{nlm \leq x \\ n \in \mathbb{N}, l \in \mathcal{N}_\mathcal{S} \\  m \in \mathbb{N}^{1/\beta}}} \mu_{\mathcal{S}}(l)
	= \frac{\zeta(1/\beta)  }{\zeta_{\mathcal{S}}(1)} x + \frac{\zeta(\beta) }{\zeta_{\mathcal{S}}(\beta)} x^{\beta} + O_\eps\left(x^{\frac{\beta}{1 + \beta - 2\alpha/(\alpha + 2)}+ \eps}\right),
\]
where we have used that $\beta > 2\alpha/(\alpha + 2)$. Note that $\beta > \alpha/2$, so $\zeta_{\mathcal S}(s)$ cannot vanish at $s =1$ or $s=\beta$ as $\log \zeta_{\mathcal{S}}(\sigma)  = \log \zeta(\sigma+1-\alpha) + O_\eps(1)$ for $\sigma > \alpha/2 + \eps$. So, this estimate implies that the integers of the system $\mathcal{P}_{\alpha,\beta}$ are $\beta$-well-behaved (and not better). This concludes the proof of Theorem \ref{wbs: thrh} in the case $\beta > 2\alpha/(\alpha + 2)$.

In case $\beta = 2\alpha/(\alpha + 2)$ the only difference is that we can no longer apply Lemma \ref{wbs: lemdirichlet} to obtain the estimate for the final convolution of $\mathcal{N}$ with $\mathbb{N}^{1/\beta}$. However, in that case the estimate can be obtained by even simpler means, that is, by writing
\[
	N_{\alpha,\beta}(x) = \sum_{\substack{m \leq x \\ m \in \mathbb{N}^{1/\beta}}} \biggl(\frac{x}{m \zeta_{\mathcal{S}}(1)} + O_\eps\Bigl(\frac{x^{\beta + \eps}}{m^{\beta + \eps}}\Bigr)\biggr).
\]
This gives $N_{\alpha,\beta}(x) =  \zeta(1/\beta)x /\zeta_{\mathcal{S}}(1) + O_{\eps}(x^{\beta + \eps})$. The integers of the system $\mathcal{N}_{\alpha,\beta}$ can also not be better than $\beta$-well-behaved as that would entail that the zeta function $\zeta_{\alpha,\beta}(s) = \zeta(s)\zeta(s/\beta)/\zeta_{\mathcal{S}}(s)$ is analytic at $s =\beta$, which is false. This concludes the proof of Theorem \ref{wbs: thrh} in all cases.

\begin{remark} \label{rem: prob}
As we already mentioned in the proof, one may also work without the error measure $\dif E$ in the definition of $F(x)$. So, upon setting $\dif E \equiv 0$, it is not excluded that an element occurs twice in the sequence $p_j$. This may happen if $F(p-) < j$ and $F(p) > j$ as then the random variables $P_j$ and $P_{j+1}$ both have a nonzero probability to equal $p$. We describe here how to proceed in these circumstances.

We require an estimate on the number of these double primes. In our final system we will add these double primes once again; after all we need those primes an additional time to be able to delete them twice! We shall show that
\begin{equation} \label{eq: wbsdoubleprimes} \sum_{\substack{P_{j} \leq x \\ P_j = P_{j+1}}} 1 = O(x^{\alpha/2})
\end{equation}
holds with probability $1$ and therefore there must be a sequence $p_j$ that next to the requirements of Theorem \ref{th: discretization} additionally satisfies $\pi_{\mathcal{D}}(x) = O(x^{\alpha/2})$, where $\mathcal{P}_{\mathcal{D}}$ consists\footnote{The set $\mathcal{P}_{\mathcal{D}}$ may technically not be a Beurling system as it is possible $\mathcal{P}_\mathcal{D}$ only contains finitely many elements or it may even be empty.} of the elements of $\mathcal{P}_{\mathcal{S}}$ occurring twice. Let $D_j$ denote the random variable which is $1$ if $P_j = P_{j+1}$ and $0$ otherwise and let again $q_j$ denote the prime where $F(q_j -) < j$ and $F(q_j) \geq j$. We analyze the sum $\sum_{j = 1}^{J} D_j$. The random variables $(D_j)_{j\ge1}$ are not independent, but the subsequences $(D_{2j})_{j\ge1}$ and $(D_{2j-1})_{j\ge1}$ with even and odd indices both only consist of independent random variables. We have $P(D_{j}=1)=P(P_{j}=q_{j}=P_{j+1}) \le q_j^{2\alpha-2}/4$, as $\max_{a+b=\lambda} ab = \lambda^{2}/4$.

 Therefore, if we manage to show that
\[
	B = B(J) =  \sum_{j = 1}^{J} B_j \ll q_J^{\alpha/2}, \quad J \rightarrow \infty,
\]
holds with probability $1$, where $B_j$ are independent Bernoulli variables equaling $1$ with probability $q_j^{2\alpha-2}/4$, then also $\sum_{j = 1}^J D_j \ll q_J^{\alpha/2}$ holds almost surely and this would imply that \eqref{eq: wbsdoubleprimes} holds almost surely as well.

Observe that
\[
	J = \lfloor F(q_J)\rfloor \leq \int^{q_J}_{1} \dif F(u) = \int^{q_J}_{1}  u^{\alpha-1}\dif\pi(u) \asymp \frac{q_J^{\alpha}}{\log q_J}.
\]
An application of a probabilistic lemma for estimating sums of independent random variables such as Hoeffding's inequality \cite[Th.\ 1]{Hoeffding} gives
\[
	P\left(B - \mathbb{E}(B) \geq \sqrt{J\log J}\right) \leq \exp(-2 \log J) = J^{-2}.
\]
Here $\mathbb{E}(B)$ stands for the expected value of $B$ and satisfies
\[
	\mathbb{E}(B) = \sum_{j = 1}^J \frac{q_{j}^{2\alpha-2}}{4} \leq \int^{q_J}_{1} \frac{u^{-2(1-\alpha)}}{4} \dif F(u) = \int^{q_J}_{1} \frac{u^{-3(1-\alpha)}}{4}\dif \pi(u) \ll \frac{q_J^{3\alpha - 2}}{\log q_J}.
\]
We therefore have that $P(B \geq cq_J^{\alpha/2}) \leq J^{-2}$ for a suitable absolute constant $c > 0$. Note that we use here that $\alpha < 4/5$ to guarantee that $q_J^{3\alpha - 2} \ll q_J^{\alpha /2}$. As $J^{-2}$ is summable, the Borel--Cantelli lemma implies that for each $\eps > 0$ there exists $J_0$ such that $P(B \geq cq_J^{\alpha/2}) \leq \eps$ for each $J \geq J_0$. In other words, $B \ll q_J^{\alpha/2}$ as $J \rightarrow \infty$ almost surely and this concludes the analysis of the double primes in the system $\mathcal{P}_{\mathcal{S}}$.

Summarizing, we have shown above the existence of a sequence of primes $\mathcal{P}_{\mathcal{S}}$ where each prime occurs at most twice with the following properties. Denoting $\mathcal{P}_{\mathcal{D}}$ as the set comprising the primes that occur twice in $\mathcal{P}_{\mathcal{S}}$, we have shown that the arithmetic functions of these systems satisfy, for each $\eps > 0$,
\begin{gather*}
	\pi_{\mathcal{S}}(x)  \asymp \frac{x^{\alpha}}{\log x},  \quad N_{\mathcal{S}}(x)  = ax^{\alpha} + O_{\eps}(x^{\alpha/2 + \eps}),
	\quad M_{\mathcal{S}}(x)  = \sum_{\substack{n \leq x\\ n \in \mathcal{N}_\mathcal{S}}} \mu_\mathcal{S}(n) \ll_\eps x^{\alpha/2 + \eps}, \\
 	\pi_{\mathcal{D}}(x)  = \sum_{\substack{p \leq x \\ p \in \mathcal{P}_\mathcal{D}}} 1 \ll x^{\alpha/2}, \quad N_{\mathcal{D}}(x)  \ll_\eps x^{\alpha/2 + \eps}.
\end{gather*}
The estimate on $N_{\mathcal{D}}(x)$ can be deduced from
\[
	N_\mathcal{D}(x) \leq x^{\alpha/2 + \eps}\int^{x}_{1^{-}} u^{-\alpha/2 - \eps}\dif N_\mathcal{D}(u)
	 \leq x^{\alpha/2 + \eps}\int^{\infty}_{1^{-}} u^{-\alpha/2 - \eps}\dif N_\mathcal{D}(u)  \ll_\eps x^{\alpha/2 + \eps},
\]
exploiting the boundedness of $\zeta_\mathcal{D}(s)$ on the half-planes $\Re s \geq \alpha/2 + \eps$, which is guaranteed by the estimate on $\pi_{\mathcal{D}}$.

Our Beurling system is now defined as $\mathcal{P}_{\alpha,\beta} =  \mathbb{P} \cup \mathcal{P}_\mathcal{D} \cup \mathbb{P}^{1/\beta} \setminus \mathcal{P}_\mathcal{S}$ (interpreted as multisets). With multiple applications of Lemma \ref{wbs: lemdirichlet}, one can then, similarly as before, establish
\[
	N_{\alpha,\beta}(x) = \sum_{\substack{nlrm \leq x \\ n \in \mathbb{N}, l \in \mathcal{N}_\mathcal{S} \\ r \in \mathcal{N}_\mathcal{D}, m \in \mathbb{N}^{1/\beta}}} \mu_{\mathcal{S}}(l)
	= \frac{\zeta_{\mathcal{D}}(1) \zeta(1/\beta)  }{\zeta_{\mathcal{S}}(1)} x + \frac{\zeta_{\mathcal{D}}(\beta)\zeta(\beta) }{\zeta_{\mathcal{S}}(\beta)} x^{\beta} + O_\eps\left(x^{\frac{\beta}{1 + \beta - 2\alpha/(\alpha + 2)}+ \eps}\right),
\]
for $\beta > 2\alpha/(\alpha + 2)$.

\end{remark}

\begin{remark} \:
\begin{enumerate}
\item The construction considering double primes is more precise than the one involving the error measure. In the approach of the double primes, the only restriction on $\alpha$ was  $\alpha < 4/5$, whereas it was $\alpha < 2/3$ when involving $\dif E$. However, if $2/3 \leq \alpha < 4/5$, then $\beta$ needs to be larger than $1/2$ as $\beta \geq 2\alpha/(\alpha + 2) \geq 1/2$. This then does not generate new $[\alpha,\beta]$-systems as systems with those parameters were already constructed in Section \ref{sec:betalessalpha} without assuming RH.

\item We do not exclude the possibility that certain error terms in the calculation of $N_{\alpha,\beta}(x)$ can be improved with more advanced technology than Dirichlet's hyperbola method. Such improvements, possibly combined with the double primes approach, might give rise to a larger region of acceptable $\alpha$ and $\beta$ in Theorem \ref{wbs: thrh}.
\item Algebraic number fields are other natural examples of Beurling systems providing at least conjecturally very well-behaved integers. We include here what is currently known and conjectured about the values $\alpha$ and $\beta$ for a number field $K$ of degree $n$. It was shown by Landau (see e.g.\ \cite{Landau}) that the number of integral ideals $N_{K}(x)$ of norm at most $x$ satisfies $N_{K}(x) = a_{K}x + O_{K}(x^{1-\frac{2}{n+1}})$, improving a result by Weber, and that the number of prime ideals $\pi_{K}(x)$ of norm at most $x$ satisfies $\pi_{K}(x) = \Li(x) + O_{K}\bigl(x\exp(-c_{K}\sqrt{\log x})\bigr)$, for certain positive constants $a_{K}$ and $c_{K}$ depending on the number field. Landau also showed that $N_{K}(x) = a_{K}x + \Omega_{K}(x^{\frac{1}{2}-\frac{1}{2n}})$. The exponent $1-2/(n+1)$ in Landau's ideal theorem was improved in the quadratic case by Huxley and Watt \cite{HuxleyWatt}, in the cubic case by M\"uller \cite{Muller}, and in the case $n\ge4$ by Nowak \cite{Nowak}. (Note that the proof of a claimed improvement \cite{Bordelles} of Nowak's result turned out to be flawed \cite{Bordellescorrigendum}.) It is conjectured that $N_{K}(x) = a_{K}x + O_{K,\eps}(x^{\frac{1}{2}-\frac{1}{2n}+\eps})$ for all $\eps>0$. The conjectural extended Riemann hypothesis (ERH) on the zeros of Dedekind zeta functions implies that $\pi_{K}(x) = \Li(x) + O_{K,\eps}(x^{1/2+\eps})$ for every $\eps>0$. Hence we can summarize that the Beurling number systems formed by the (norms of the) prime and integral ideals in a number field $K$ of degree $n$ are conjectured to be $[1/2, 1/2-1/(2n)]$-systems.

Naturally one can also perturb these systems as we did with the system $(\mathbb{P}, \N)$, but they give a smaller range of $[\alpha,\beta]$-systems.

\end{enumerate}
\end{remark}

\medskip


\begin{thebibliography}{99}

\bibitem{Bordelles} O.~Bordell\`{e}s, \emph{On the ideal theorem for number fields}, Funct.\ Approx.\ Comment.\ Math.\ \textbf{53} (2015), no.\ 1, 31--45.

\bibitem{Bordellescorrigendum} O.~Bordell\`{e}s, \emph{Corrigendum to the paper ``On the ideal theorem for number fields'' [Funct.\ Approximatio, Comment.\ Math. 53, No. 1, 31–45 (2015)]}, Funct.\ Approx.\ Comment.\ Math.\ \textbf{64} (2021), no.\ 1, 71--76.

\bibitem{BDV2020} F.~Broucke, G.~Debruyne, J.~Vindas, \emph{Beurling integers with RH and large oscillation}, Adv. Math. \textbf{370} (2020), article 107240.

\bibitem{BDV2022} F.~Broucke, G.~Debruyne, J.~Vindas, \emph{The optimal Malliavin-type remainder for Beurling generalized integers}, J. Inst. Math. Jussieu. \textbf{23}(1) (2024), 249--278.

\bibitem{BrouckeVindas} F.~Broucke, J.~Vindas, \emph{A new generalized prime random approximation procedure and some of its applications}, to appear in Math. Z., preprint available on arXiv: 2102.08478.

\bibitem{DiamondMontgomeryVorhauer} H.~G.~Diamond, H.~L.~Montgomery, U.~M.~A.~Vorhauer, \emph{Beurling primes with large oscillation}, Math. Ann. \textbf{334} (2006), 1--36.

\bibitem{DiamondZhangbook} H.~G.~Diamond, W.-B.~Zhang, \emph{Beurling generalized numbers,} Mathematical Surveys and Monographs series,  American Mathematical Society, Providence, RI, 2016.

\bibitem{Hilberdink2005} T.~W.~Hilberdink, \emph{Well-behaved Beurling primes and integers}, J. Number Theory \textbf{112} (2005), no.\ 2, 332--344.

\bibitem{Hoeffding} W.~Hoeffding, \emph{Probability inequalities for sums of bounded random variables}, J. Amer. Statist. Assoc. \textbf{58} (1963), 13--30.

\bibitem{HuxleyWatt} M.~N.~Huxley, N.~Watt, \emph{The number of ideals in a quadratic number field. II}, Israel J.\ Math. \textbf{120} (2000), Part A, 
125--153.

\bibitem{Landau}  E.~Landau, \emph{Einf\"{u}hrung in die Elementare und Analytische Theorie der Algebraischen Zahlen und der Ideale}, AMS Chelsea Publishing, 2005, reprint of the 1949 Edition.

\bibitem{Littlewood1914} J.~E.~Littlewood, \emph{Sur la distribution des nombres premiers}, C. R. Acad. Sci. Paris \textbf{158} (1914), 1869--1872.

\bibitem{MontgomeryVaughanbook} H.~L.~Montgomery, R.~C.~Vaughan, \emph{Multiplicative number theory. I. Classical Theory.}, Cambridge Studies in Advanced Mathematics, 97, Cambridge University Press, Cambridge, 2007.

\bibitem{Muller} W.~M\"{u}ller, \emph{On the distribution of ideals in cubic number fields}, Monatsh.\ Math.\ \textbf{106} (1988), no.\ 3, 211--219.

\bibitem{Nowak} W.~G.~Nowak, \emph{On the distribution of integer ideals in algebraic number fields}, Math.\ Nachr.\ \textbf{161} (1993), 59--74.

\bibitem{Revesz2022} Sz.~Gy.~R\'{e}v\'{e}sz, \emph{Oscillation of the Remainder Term in the Prime Number Theorem of Beurling, "Caused by a Given $\zeta$-Zero"}, Int.\ Math.\ Res.\ Not.\ IMRN (2023), no. 14, 11752–11790.

\bibitem{Zhang2007} W.-B.~Zhang, \emph{Beurling primes with RH and Beurling primes with large oscillation}, Math.\ Ann. \textbf{337} (2007), no.\ 3, 671--704.

\end{thebibliography}
\end{document}